\newtheorem{theorem}{Theorem}[section]
\newtheorem{proposition}[theorem]{Proposition}
\newtheorem{corollary}[theorem]{Corollary}
\newtheorem{definition}[theorem]{Definition}
\newtheorem{example}[theorem]{Example}
\newtheorem{lemma}[theorem]{Lemma}
\newtheorem{remark}[theorem]{Remark}
\newcommand{\asc}{{\rm asc}}
\newcommand{\des}{{\rm des}}
\newcommand{\esd}{{\rm esd}}
\newcommand{\inte}{{\rm int}}
\newcommand{\sd}{{\rm sd}}
\newcommand{\supp}{{\rm supp}}
\newcommand{\dD}{{\mathcal D}}
\newcommand{\eE}{{\mathcal E}}
\newcommand{\fF}{{\mathcal F}}
\newcommand{\hH}{{\mathcal H}}
\newcommand{\iI}{{\mathcal I}}
\newcommand{\jJ}{{\mathcal J}}
\newcommand{\pP}{{\mathcal P}}
\newcommand{\lL}{{\mathcal L}}
\newcommand{\qQ}{{\mathcal Q}}
\newcommand{\RR}{{\mathbb R}}
\newcommand{\fS}{{\mathfrak S}}
\newcommand{\NN}{{\mathbb N}}
\newcommand{\ZZ}{{\mathbb Z}}
\renewcommand{\to}{\rightarrow}
\newcommand{\sm}{{\smallsetminus}}
\begin{document}
\title[Face numbers of uniform triangulations]
{Face numbers of uniform triangulations of 
simplicial complexes}

\author{Christos~A.~Athanasiadis}

\address{Department of Mathematics\\
National and Kapodistrian University of Athens\\
Panepistimioupolis\\
15784 Athens, Greece}
\email{caath@math.uoa.gr}

\thanks{ \textit{Key words and phrases}. 
Simplicial complex, triangulation, face enumeration,
$h$-polynomial, real-rootedness, barycentric
subdivision, edgewise subdivision.}

\begin{abstract}
A triangulation of a simplicial complex $\Delta$ is 
called uniform if the $f$-vector of its restriction 
to a face of $\Delta$ depends only on the dimension of 
that face. This paper proves that the entries of the 
$h$-vector of a uniform triangulation of $\Delta$ can 
be expressed as nonnegative integer linear combinations
of those of the $h$-vector of $\Delta$, where the
coefficients depend only on the dimension of $\Delta$
and the $f$-vectors of the restrictions of the 
triangulation to simplices of various dimensions. 
Moreover, it provides information about these 
coefficients, including 
formulas, recurrence relations and various 
interpretations, and gives a criterion for the 
$h$-polynomial of a uniform triangulation to be 
real-rooted. These results unify and generalize several 
results in the literature about special types of 
triangulations, such as barycentric, edgewise and 
interval subdivisions.
\end{abstract}

\maketitle

\section{Introduction}
\label{sec:intro}

The study of triangulations of simplicial complexes 
from a face enumeration point of view was pioneered by 
Stanley~\cite{Sta92} \cite[Section~III.10]{StaCCA}. The 
main objective of~\cite{Sta92} was to understand the 
effect that various types of subdivision, including 
triangulations, have on the $h$-vector (a certain 
linear transformation of the face vector) of a 
simplicial complex; see \cite[Chapter~II]{StaCCA} 
for the importance of $h$-vectors on the face 
enumeration of simplicial complexes.

The transformation of the $h$-vector has been studied 
for specific triangulations since then, beginning with 
the work of Brenti--Welker~\cite{BW08} on barycentric 
subdivisions. These authors showed that the $h$-vector 
of the barycentric subdivision of a simplicial complex 
$\Delta$ is given by a nonnegative integer linear 
transformation of the $h$-vector of $\Delta$, which 
depends only on the dimension of $\Delta$. They also 
provided a combinatorial interpretation of the 
coefficients in terms of permutation enumeration. 
Analogous results have been proven for edgewise 
subdivisions~\cite{BW09}, partial barycentric 
subdivisions~\cite{AW18}, interval 
subdivisions~\cite{AN20} and antiprism 
triangulations~\cite{ABK20+}. The main result 
of~\cite{BW08} states that the $h$-polynomial (the 
generating polynomial for the $h$-vector) of the 
barycentric subdivision of $\Delta$ has only real 
roots (in particular, log-concave and unimodal 
coefficients) for every simplicial complex $\Delta$ 
with nonnegative $h$-vector. An analogous statement 
for edgewise subdivision follows from a result of 
Jochemko~\cite{Jo18} on the Veronese construction 
for rational formal power series, which improved 
earlier results by Brenti--Welker~\cite{BW09} and 
Beck--Stapledon~\cite{BS10}. 

The present paper aims to provide a common framework 
to explain and generalize these results. Given $d \in 
\NN \cup \{ \infty\}$, a triangular array $\fF$ of 
numbers $f_\fF(i,j)$ for $0 \le 
i \le j \le d$ and a simplicial complex $\Delta$ of 
dimension less than $d$, we will say that a triangulation 
$\Delta'$ of $\Delta$ is \emph{$\fF$-uniform} if for all 
$i, j$, the restriction of $\Delta'$ to any 
$(j-1)$-dimensional face of $\Delta$ has exactly 
$f_\fF(i,j)$ faces of dimension $i-1$. We will refer to 
the array $\fF$ as an \emph{$f$-triangle} of size $d$ 
and will call $\Delta'$ \emph{uniform}, if it is 
$\fF$-uniform for some $\fF$. All aforementioned examples 
of triangulations studied in the literature are uniform, 
since they have the stronger property that their 
restrictions to faces of $\Delta$ of the same dimension 
are combinatorially isomorphic. The following statement 
is the first main contribution of this paper. We use the
convention that $\{0, 1,\dots,d\} := \NN = \{0, 1, 
2,\dots\}$, when $d = \infty$.
\begin{theorem} \label{thm:mainA} 
Let $\fF$ be an $f$-triangle of size $d$. There exist 
nonnegative integers $p_\fF(n,k,j)$ for $n \in \{0, 
1,\dots,d\}$ and $k, j \in \{0, 1,\dots,n\}$, such that 
for all $n \le d$,
\begin{equation}
\label{eq:mainA}
h_j(\Delta') \ = \ \sum_{k=0}^n p_\fF(n,k,j) h_k(\Delta) 
\end{equation} 
for every $(n-1)$-dimensional simplicial complex $\Delta$,
every $\fF$-uniform triangulation $\Delta'$ of $\Delta$ 
and all $j \in \{0, 1,\dots,n\}$.
\end{theorem}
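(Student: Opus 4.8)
The plan is to extract the coefficients $p_\fF(n,k,j)$ from Stanley's theory of local $h$-vectors \cite{Sta92}, exploiting that $\fF$-uniformity collapses all of the relevant local data to quantities depending only on dimension. Write $h(\Delta,x)=\sum_j h_j(\Delta)\,x^j$. Recall that for a triangulation $\Gamma$ of the simplex $2^V$ the local $h$-polynomial is $\ell_V(\Gamma,x)=\sum_{W\subseteq V}(-1)^{|V|-|W|}h(\Gamma|_W,x)$, that it has nonnegative integer coefficients \cite{Sta92}, and that for every triangulation $\Delta'$ of an $(n-1)$-dimensional complex $\Delta$ one has Stanley's subdivision formula
\[
h(\Delta',x)\ =\ \sum_{F\in\Delta}\ell_F(\Delta'|_F,x)\,h(\link_\Delta F,x),
\]
where $\Delta'|_F$ denotes the triangulation of the simplex $2^F$ induced by $\Delta'$. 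If $\Delta'$ is $\fF$-uniform then $h(\Delta'|_W,x)$ is determined by the $f$-vector $f_{i-1}(\Delta'|_W)=f_\fF(i,|W|)$ together with the dimension $|W|-1$, so it depends only on $|W|$; hence $\ell_F(\Delta'|_F,x)$ depends only on $j:=|F|$, and I will write it $\ell_\fF^{(j)}(x)$. Since $\dim\Delta=n-1$, the complex $\Delta$ has faces of every cardinality $j\in\{0,1,\dots,n\}$, and for such a face $F$ the polynomial $\ell_\fF^{(j)}(x)$ is the local $h$-polynomial of the triangulation $\Delta'|_F$ of $2^F$; by Stanley's theorem it therefore has nonnegative integer coefficients, and its degree is at most $j$. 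Grouping the subdivision formula according to $j$ gives
\[
h(\Delta',x)\ =\ \sum_{j=0}^n\ell_\fF^{(j)}(x)\sum_{\substack{F\in\Delta\\ |F|=j}}h(\link_\Delta F,x).
\]

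Next I would rewrite the inner ``link sum'' as a nonnegative combination of the $h_k(\Delta)$ by a purely formal computation. Each face of $\Delta$ of cardinality $i+j$ contains exactly $\binom{i+j}{j}$ faces of cardinality $j$, so $\sum_{|F|=j}f_{i-1}(\link_\Delta F)=\binom{i+j}{j}f_{i+j-1}(\Delta)$; since every $\link_\Delta F$ with $|F|=j$ has dimension $n-1-j$, substituting this into the $h$-polynomial of a link, then replacing face numbers by $h$-numbers through $f_{m-1}(\Delta)=\sum_k\binom{n-k}{m-k}h_k(\Delta)$ and collapsing the resulting double binomial sum by Vandermonde's identity, one obtains
\[
\sum_{\substack{F\in\Delta\\ |F|=j}}h(\link_\Delta F,x)\ =\ \sum_{k=0}^n q(n,j,k,x)\,h_k(\Delta),\qquad q(n,j,k,x)=\sum_{b}\binom{k}{j-b}\binom{n-k}{b}x^{k-j+b},
\]
valid for every $(n-1)$-dimensional $\Delta$; here $q(n,j,k,x)$ is a polynomial with nonnegative integer coefficients of degree at most $n-j$, depending only on $n,j,k$.

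Combining the two displays, set $P_{\fF,n,k}(x):=\sum_{j=0}^n\ell_\fF^{(j)}(x)\,q(n,j,k,x)$. The degree bounds give $\deg P_{\fF,n,k}\le n$, all coefficients of $P_{\fF,n,k}$ are nonnegative integers, and $P_{\fF,n,k}$ depends only on $\fF$ and $n$. Since $h(\Delta',x)=\sum_{k=0}^n P_{\fF,n,k}(x)\,h_k(\Delta)$, comparing coefficients of $x^j$ for $0\le j\le n$ shows that the nonnegative integers $p_\fF(n,k,j):=[x^j]P_{\fF,n,k}(x)$ satisfy \eqref{eq:mainA} for every $(n-1)$-dimensional $\Delta$ and every $\fF$-uniform triangulation $\Delta'$ of $\Delta$.

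The single deep ingredient is the nonnegativity of the local $h$-polynomial, Stanley's theorem from \cite{Sta92}, which I would use as a black box; all the rest — the collapse of local data under $\fF$-uniformity, the link-sum identity, and the bookkeeping with degrees — is routine. The one point calling for a moment's care is the identification of $\ell_\fF^{(j)}(x)$ with an honest local $h$-polynomial of a triangulated simplex, needed before Stanley's nonnegativity may be invoked; this is legitimate because in any instance of the theorem the complex $\Delta$ already supplies a $(j-1)$-dimensional face $F$ whose induced triangulation $\Delta'|_F$ is precisely such a triangulation, with $f$-vector prescribed by $\fF$. (One can alternatively read off the same coefficients from the more elementary identity $f_{i-1}(\Delta')=\sum_j g_\fF(i,j)\,f_{j-1}(\Delta)$, obtained by sorting the faces of $\Delta'$ according to their carriers in $\Delta$, where $g_\fF(i,j)$ is the number of $(i-1)$-faces of $\Delta'|_F$ interior to a $(j-1)$-face $F$; but that route does not make nonnegativity visible, which is exactly what local $h$-vectors provide.)
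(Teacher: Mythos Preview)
Your argument is correct and lands on exactly the same explicit formula as the paper, namely
\[
p_{\fF,n,k}(x)\ =\ \sum_{r=0}^n \ell_\fF(\sigma_r,x)\sum_{i=0}^r\binom{n-k}{i}\binom{k}{r-i}x^{k-r+i},
\]
with nonnegativity coming from Stanley's theorem on local $h$-polynomials. The organization, however, is different. The paper starts from the elementary carrier-counting identity $f_{j-1}(\Delta')=\sum_m f_{m-1}(\Delta)\,f^\circ_\fF(j,m)$, converts to $h$-polynomials, passes through the interior $h$-polynomials $h^\circ_\fF(\sigma_m,x)=x^m h_\fF(\sigma_m,1/x)$, and only at the end rewrites everything in terms of local $h$-polynomials via their symmetry; the binomial identity you call Vandermonde is isolated as a separate lemma (Lemma~\ref{lem:identity}). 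You instead invoke Stanley's global subdivision formula $h(\Delta',x)=\sum_F\ell_F(\Delta'_F,x)\,h(\link_\Delta F,x)$ from the outset, so local $h$-polynomials appear immediately and the remaining work is the link-sum identity, which is the same computation as the paper's lemma. Your route is shorter and more conceptual; the paper's is more self-contained, since it does not quote the subdivision formula as a black box and also yields the intermediate expression $p_{\fF,n,k}(x)=\sum_{m\ge k}\binom{n-k}{m-k}x^m(1-x)^{n-m}h_\fF(\sigma_m,1/x)$ along the way.

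Two small points of care. First, Stanley states the subdivision formula for pure $\Delta$; your application to arbitrary $(n-1)$-dimensional $\Delta$ is fine provided $h(\link_\Delta F,x)$ is computed with respect to $n-|F|$ rather than the actual link dimension, and your face-count identity already does this implicitly. Second, your remark that every link has dimension $n-1-j$ is literally false for non-pure $\Delta$, but this is harmless for the same reason.
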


Given the main results of~\cite{BW08, Jo18} on 
real-rootedness, it seems natural to ask which uniform 
triangulations transform $h$-polynomials with nonnegative 
coefficients into polynomials with only real 
(necessarily negative) roots. Our second main 
contribution is a partial answer to 
this question which easily applies to barycentric and 
edgewise subdivisions (see Section~\ref{sec:apps}). Given 
an $f$-triangle $\fF$, we will denote by $h_\fF(\sigma_n, 
x)$ and $h_\fF(\partial \sigma_n, x)$ the $h$-polynomial of 
any $\fF$-uniform triangulation of the $(n-1)$-dimensional 
simplex $\sigma_n$ and its boundary complex, respectively
(by Theorem~\ref{thm:mainA}, these polynomials depend only 
on $n$ and $\fF$). 
\begin{theorem} \label{thm:mainB} 
Let $\fF$ be an $f$-triangle of size $d \in \NN$ and 
assume the following: 

\begin{itemize}
\itemsep=0pt
\item[{\rm (i)}]
$h_\fF(\sigma_n, x)$ is a real-rooted polynomial for all 
$n < d$.

\item[{\rm (ii)}]
$h_\fF(\sigma_n, x) - h_\fF(\partial \sigma_n, x)$ is 
either identically zero, or a real-rooted polynomial 
of degree $n-1$ with nonnegative coefficients which is 
interlaced by $h_\fF(\sigma_{n-1}, x)$, for all $n \le d$.
\end{itemize}
Then, for every $(d-1)$-dimensional simplicial complex 
$\Delta$ with nonnegative $h$-vector, the polynomial 
$h(\Delta', x)$ is real-rooted for every $\fF$-uniform 
triangulation $\Delta'$ of $\Delta$.
\end{theorem}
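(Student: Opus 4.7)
The plan is to reduce Theorem~\ref{thm:mainB} to an interlacing statement about certain ``column polynomials'' and then verify it by induction on dimension. For a $(d-1)$-dimensional complex $\Delta$ with nonnegative $h$-vector, Theorem~\ref{thm:mainA} gives
\begin{equation*}
h(\Delta', x) \ = \ \sum_{k=0}^{d} h_k(\Delta)\, q_k(x), \qquad q_k(x) \ := \ \sum_{j=0}^{d} p_\fF(d,k,j)\, x^j.
\end{equation*}
Since each $h_k(\Delta) \ge 0$, the classical principle that any nonnegative linear combination of an \emph{interlacing sequence} of polynomials (with nonnegative coefficients) is real-rooted reduces the theorem to showing that $(q_0, q_1, \ldots, q_d)$ is such an interlacing sequence.

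I would establish this in the stronger form: for every $n \le d$, the analogously defined sequence $(q_0^{(n)}, \ldots, q_n^{(n)})$, with $q_k^{(n)}(x) := \sum_j p_\fF(n,k,j)\,x^j$, is an interlacing sequence, arguing by induction on $n$. Plugging $\Delta = \sigma_n$ (whose $h$-vector is $e_0$) into~\eqref{eq:mainA} yields $q_0^{(n)}(x) = h_\fF(\sigma_n, x)$, which is real-rooted by hypothesis~(i). Plugging $\Delta = \partial\sigma_n$ (whose $h$-vector is the all-ones vector) yields $\sum_{k=0}^{n-1} q_k^{(n-1)}(x) = h_\fF(\partial\sigma_n, x)$, so the ``interior'' polynomial $I_n(x) := h_\fF(\sigma_n, x) - h_\fF(\partial\sigma_n, x)$ is captured by $q_0^{(n)} - \sum_k q_k^{(n-1)}$. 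Using further test complexes (e.g.\ skeleta of simplices, or truncations whose $h$-vectors are explicitly computable) one can then solve for each $q_k^{(n)}$ explicitly as a nonnegative combination of $I_n$ and the lower-level polynomials $q_\ell^{(n-1)}$. At this point hypothesis~(ii) provides exactly the interlacing $q_0^{(n-1)} = h_\fF(\sigma_{n-1}, x) \preceq I_n(x)$ (the case $I_n \equiv 0$ is trivial), and combining this with the inductive interlacing sequence at level $n-1$ and the standard closure property that polynomials sharing a common interleaver and admitting nonnegative coefficients are themselves pairwise compatibly interlaced, one obtains the interlacing sequence at level~$n$.

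The main obstacle is the middle step: finding the explicit nonnegative expression of each $q_k^{(n)}$ in terms of the ``building blocks'' $I_m(x)$ for $m \le n$ and $q_\ell^{(n-1)}(x)$, and identifying the precise inductive invariant that such a formula preserves. The shape of hypothesis~(ii) — requiring not just real-rootedness of $I_n$ but the specific interlacing by $h_\fF(\sigma_{n-1}, x)$ and the degree being exactly $n-1$ — is dictated precisely by what this induction demands, so once the explicit formula is in hand, the rest of the proof should be a routine application of standard interlacing lemmas.
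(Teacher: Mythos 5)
Your high-level strategy matches the paper's: set $p_{\fF,n-1,n}(x):=h_\fF(\sigma_n,x)-h_\fF(\partial\sigma_n,x)$, show by induction on $n$ that the sequences $(p_{\fF,n,0},\ldots,p_{\fF,n,n})$ are interlacing, and then use the fact that a nonnegative linear combination of an interlacing sequence of polynomials with nonnegative coefficients is real-rooted. The ``explicit nonnegative expression'' you flag as the main obstacle is exactly Lemma~\ref{lem:recurrence}:
\[
p_{\fF,n,k}(x)\ =\ x\sum_{i=0}^{k-1}p_{\fF,n-1,i}(x)\ +\ \sum_{i=k}^{n}p_{\fF,n-1,i}(x),
\]
which falls out of the elementary recurrence in Corollary~\ref{cor:p-char}; with this in hand the passage from an interlacing $\pP_{\fF,n}$ to an interlacing $\qQ_{\fF,n}$ is a direct application of a known interlacing lemma (the paper cites \cite[Corollary~8.7]{Bra15}).

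However, there is a genuine gap in your final step, and the ``routine application of standard interlacing lemmas'' you invoke is not available. You appeal to a closure property that ``polynomials sharing a common interleaver and admitting nonnegative coefficients are themselves pairwise compatibly interlaced'' --- this is simply false. For instance, $x+5/2$ interlaces both $(x+1)(x+4)$ and $(x+2)(x+3)$, yet those two quadratics do not interlace each other in either direction. What weak transitivity (\cite[Lemma~2.3]{Bra06}, \cite[Proposition~3.3]{Wa92}) actually gives is: the chain conditions $g_{i-1}\preceq g_i$ for all $i$ \emph{together with} the end-to-end condition $g_0\preceq g_m$ imply full interlacing. Thus to show that $\pP_{\fF,n}$ is interlacing (knowing that its first $n$ terms $\qQ_{\fF,n-1}$ are), it does not suffice to know that $p_{\fF,n-1,0}\preceq I_n$ (hypothesis (ii)); you must additionally establish the chain link $p_{\fF,n-1,n-1}\preceq I_n$, and nothing in your outline does this. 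The paper handles it by a separate, non-routine argument: $I_n$ has zero constant term, so $p_{\fF,n-1,0}$ is interlaced by $I_n(x)/x$; both $p_{\fF,n-1,0}$ and $I_n(x)/x$ then have degree $n-2$, and the latter is symmetric with center $(n-2)/2$, from which one deduces that $x^{n-2}p_{\fF,n-1,0}(1/x)$ interlaces $I_n(x)/x$ and hence $p_{\fF,n-1,n-1}(x)=x^{n-1}p_{\fF,n-1,0}(1/x)$ interlaces $I_n(x)$. This is precisely where the ``degree exactly $n-1$'' clause in hypothesis (ii), together with the palindromicity of $I_n$, is used --- so your remark that the shape of (ii) ``is dictated precisely by what this induction demands'' is correct in spirit, but the demand is subtler than a routine lemma and you would need to supply this argument to close the proof.
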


We now provide some more details about the content, 
methods and structure of this paper. 
Section~\ref{sec:pre} includes preliminaries on 
simplicial complexes, triangulations and their face 
enumeration. Section~\ref{sec:uniform} discusses 
uniform triangulations, their basic properties 
and motivating examples, given by barycentric and 
edgewise subdivisions and their variations and 
generalizations. A similar concept was introduced 
in \cite[Section~5]{DPS12} in order to study the 
asymptotics of the roots of the $h$-polynomial of a 
simplicial complex after iterated simplicial 
subdivision.

Theorem~\ref{thm:mainA} is proven in 
Section~\ref{sec:transform} and some immediate 
consequences are drawn (see Corollary~\ref{cor:mainA}). 
An explicit formula 
(Equation~(\ref{eq:pformula})) and further information
(see Proposition~\ref{prop:coef}), including a symmetry 
property and a universal recurrence relation, are given 
there for the coefficients which appear in the 
transformation 
(\ref{eq:mainA}) of the $h$-vector of a simplicial 
complex $\Delta$ under uniform triangulation. Perhaps 
not surprisingly, their nonnegativity follows from that 
of the local $h$-polynomials \cite[Section~4]{Sta92} of 
the restrictions of the triangulation to the faces of 
$\Delta$. In particular, the results of this paper are 
valid more generally for the class of quasi-geometric 
simplicial subdivisions \cite{Sta92}, which includes 
that of geometric simplicial subdivisions 
(triangulations), discussed here. One should note that
for the special types of triangulations treated in 
\cite{AW18, AN20, BW08, BW09}, the nonnegativity of the 
coefficients in (\ref{eq:mainA}) is proven there by 
finding 
explicit combinatorial interpretations by various 
techniques (a task which has required considerable 
effort in each case).
This is achieved in Section~\ref{sec:transform} for the 
$r$-colored barycentric subdivision by exploiting the 
universal recurrence (see 
Proposition~\ref{prop:csdr-coef}). 

Section~\ref{sec:operator} extends the subdivision 
operator on polynomials, associated with barycentric 
subdivisions (see \cite[Section~4]{Bra06} 
\cite[Section~7.3.3]{Bra15} and references therein),
to arbitrary uniform triangulations. The main properties
of this operator, developed there, yield two
interpretations of the coefficients in transformation 
(\ref{eq:mainA}) (see Propositions~\ref{prop:hsdop} 
and~\ref{prop:relative}). One of them is exploited in 
order to give a short proof of the recurrence relation
for these coefficients (see 
Corollary~\ref{cor:p-char}). The other expresses them 
as entries of $h$-vectors of Cohen--Macaulay relative 
simplicial complexes and results in a new proof of 
their nonnegativity (see 
Remark~\ref{rem:nonnegativity}).

Section~\ref{sec:roots} proves Theorem~\ref{thm:mainA} 
using the theory of interlacing polynomials. 
Section~\ref{sec:apps} includes applications. Among  
others, it recovers the main result of 
Brenti--Welker~\cite{BW08} on barycentric subdivisions 
and (partially) that of Jochemko~\cite{Jo18} on edgewise 
subdivisions and the Veronese construction for formal 
power series (see Examples~\ref{ex:sd} 
and~\ref{ex:esdr}). For an approach via shellability 
which applies to these two situations, as well as to 
barycentric subdivisions of certain cubical polytopes, 
see~\cite{HS20+}. Section~\ref{sec:apps} 
also deduces that certain polynomials which have 
appeared in the combinatorial literature have 
nonnegative, real-rooted and interlacing symmetric 
decompositions. Section~\ref{sec:rem} concludes with 
some comments and directions for further research.

\section{Preliminaries}
\label{sec:pre}

This section recalls definitions and background on
the face enumeration of (abstract) simplicial complexes 
and their triangulations. Any undefined terminology can
be found in~\cite{Sta92, StaCCA}. All simplicial 
complexes we consider will be finite. Throughout this 
paper, we denote by $\sigma_n$ the (simplicial complex 
of faces of the) standard $(n-1)$-dimensional simplex 
in $\RR^n$. We also set $\NN = \{0, 1, 2,\dots\}$, we 
recall our convention from the introduction that $\{0, 
1,\dots,\infty\} = \NN$ and 
for $d \in \NN \cup \{ \infty\}$, we denote by $\RR_d
[x]$ the real vector space of polynomials of degree at 
most $d$.

The \emph{$f$-polynomial} of an $(n-1)$-dimensional 
(abstract) simplicial complex $\Delta$ is defined as
$f(\Delta, x) := \sum_{i=0}^n f_{i-1} (\Delta) x^i$,
where $f_i (\Delta)$ is the number of 
$i$-dimensional faces of $\Delta$. The numerical 
sequence $f(\Delta) := (f_{-1}(\Delta), 
f_0(\Delta),\dots,f_{n-1}(\Delta))$ is called the 
\emph{$f$-vector}. The \emph{$h$-polynomial} of 
$\Delta$ is then defined by the formula
\begin{eqnarray*}
h(\Delta, x) & := & (1-x)^n f(\Delta, \frac{x}{1-x}) 
\ = \ \sum_{i=0}^n f_{i-1} (\Delta) \, x^i 
(1-x)^{n-i} \\ & := & \sum_{i=0}^n \, h_i (\Delta) 
x^i 
\end{eqnarray*}
and the sequence $h(\Delta) := (h_0(\Delta), h_1
(\Delta),\dots,h_n(\Delta))$ is called the 
\emph{$h$-vector} of $\Delta$. The polynomial 
$h(\Delta, x)$ has nonnegative coefficients for 
every $\Delta$ which is Cohen--Macaulay over some 
field. This happens, in particular, if $\Delta$ 
triangulates a ball or sphere (meaning that its
geometric realization is homeomorphic to a ball 
or sphere). Moreover, $h(\Delta, x)$ is symmetric, 
with center of symmetry $n/2$ (meaning that $h_i
(\Delta) = h_{n-i}(\Delta)$ for $0 \le i \le n$),
if $\Delta$ triangulates a sphere. For more 
information on these topics, we refer the reader 
to \cite[Chapter~II]{StaCCA}.

A \emph{relative simplicial complex} 
\cite[Section~III.7]{StaCCA} is any pair $(\Delta, 
\Gamma)$, where $\Delta$ is a simplicial complex 
and $\Gamma$ is a subcomplex of $\Delta$. The 
definitions of the $f$- and $h$-polynomial extend
naturally to this setting. Following 
\cite[Section~2]{BS20}, we say that $h(x) := 
(1-x)^n f(x/(1-x))$ is the \emph{$h$-polynomial}
associated to $f(x) \in \RR_n[x]$ (with respect to
$n$). Equivalently, $f(x) = (1+x)^n h(x/(1+x))$ is 
the \emph{$f$-polynomial} associated to $h(x) \in 
\RR_n[x]$ (with respect to $n$). The $h$-polynomial 
$h(\Delta / \Gamma, x)$ of the relative complex 
$(\Delta, \Gamma)$ is then defined as the $h$-polynomial
associated to the $f$-polynomial $f(\Delta / \Gamma, 
x) := \sum_{i=0}^n f_{i-1} (\Delta / \Gamma) x^i$,
where $n-1$ is the dimension of $\Delta$ and $f_i 
(\Delta / \Gamma)$ is the number of $i$-dimensional 
faces of $\Delta$ which do not belong to $\Gamma$. 
In particular, this defines the $h$-polynomial of
the interior $\inte(\Delta) = \Delta \sm \partial
\Delta$ of $\Delta$, when $\Delta$ triangulates a 
ball and $\partial \Delta$ is its boundary complex.

The following statement is a special case of 
\cite[Lemma~6.2]{Sta87}.
\begin{proposition} \label{prop:hsymmetry} 
{\rm (\cite{Sta87})}
Let $\Delta$ be a triangulation of an 
$(n-1)$-dimensional ball. Let $\Gamma$ be a 
subcomplex of $\partial\Delta$ which is homeomorphic 
to an $(n-2)$-dimensional ball or sphere and 
$\bar{\Gamma}$ be the subcomplex of $\partial\Delta$ 
whose facets are those of $\partial\Delta$ which do 
not belong to $\Gamma$. Then,
\[ x^n h(\Delta / \Gamma, 1/x) \ = \ h(\Delta /
   \bar{\Gamma}, x). \]
In particular, $x^n h(\Delta, 1/x) = h(\inte(\Delta), 
x)$.
\end{proposition}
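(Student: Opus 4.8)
The plan is to recognize this statement as an instance of combinatorial reciprocity for relative simplicial complexes, and to prove the first identity via a manipulation of $f$-polynomials combined with the Dehn--Sommerville-type relations that hold for triangulated balls. Write $n-1 = \dim \Delta$. Since $\Gamma$ and $\bar\Gamma$ are subcomplexes of $\partial\Delta$ whose facet sets partition the facets of $\partial\Delta$, every face of $\partial\Delta$ lies in $\Gamma$ or $\bar\Gamma$ (or both, if it lies in $\Gamma \cap \bar\Gamma$). First I would record the inclusion-exclusion identity $f_i(\partial\Delta) = f_i(\Gamma) + f_i(\bar\Gamma) - f_i(\Gamma \cap \bar\Gamma)$, and hence for the relative face numbers $f_i(\Delta/\Gamma) = f_i(\Delta) - f_i(\Gamma)$ and similarly for $\bar\Gamma$; adding these gives
\[
f_i(\Delta/\Gamma) + f_i(\Delta/\bar\Gamma) \ = \ 2 f_i(\Delta) - f_i(\partial\Delta) + \bigl( f_i(\Gamma \cap \bar\Gamma) - f_i(\partial\Delta) \bigr),
\]
wait---more cleanly, $f_i(\Delta/\Gamma) + f_i(\Delta/\bar\Gamma) = 2f_i(\Delta) - f_i(\Gamma) - f_i(\bar\Gamma) = 2f_i(\Delta) - f_i(\partial\Delta) - f_i(\Gamma\cap\bar\Gamma)$. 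Since $\Gamma \cap \bar\Gamma$ is (homeomorphic to) an $(n-3)$-dimensional sphere bounding both $\Gamma$ and $\bar\Gamma$, this last term is controlled. The point is to translate everything into $h$-polynomials, where the desired relation $x^n h(\Delta/\Gamma,1/x) = h(\Delta/\bar\Gamma,x)$ reads, after substituting the definitions, as a clean reciprocity.

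The cleanest route, and the one I would actually pursue, is not the bare counting above but a homological/topological argument: $h(\Delta/\Gamma,x)$ is (up to the standard normalization) an alternating sum recording the Euler characteristics of the relative chain complex of $(\Delta,\Gamma)$ restricted to skeleta, and the self-map $x \mapsto 1/x$ combined with multiplication by $x^n$ corresponds, under the $f$-to-$h$ transform, to the duality between the relative chains $C_*(\Delta,\Gamma)$ and $C_*(\Delta,\bar\Gamma)$ that is furnished by (simplicial) Poincaré--Lefschetz duality for the ball $\Delta$ with its boundary decomposed as $\partial\Delta = \Gamma \cup \bar\Gamma$. Concretely, I would invoke \cite[Lemma~6.2]{Sta87} in the generality in which it is stated there and simply specialize: Stanley's lemma gives precisely a reciprocity of this shape for any ball whose boundary is partitioned into two pieces along a common sphere, and the hypotheses here (triangulated ball, $\Gamma$ a ball subcomplex of the boundary, $\bar\Gamma$ the complementary-facet subcomplex) are exactly the specialization. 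So the first displayed identity is a direct citation; the only work is to check that "complementary-facet subcomplex" matches the complementary piece in Stanley's formulation, which is immediate from the definition of $\bar\Gamma$.

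For the "in particular" clause I would take $\Gamma = \partial\Delta$ itself (so that $\bar\Gamma = \{\emptyset\}$ is the void/empty complex, its only "facet" being the empty face, equivalently $\bar\Gamma$ contributes no faces and $\Delta/\bar\Gamma = \Delta$). Then $f_i(\Delta/\partial\Delta) = f_i(\Delta) - f_i(\partial\Delta) = f_i(\inte\Delta)$ by definition of the interior, so $h(\Delta/\partial\Delta,x) = h(\inte(\Delta),x)$, while $h(\Delta/\bar\Gamma,x) = h(\Delta/\{\emptyset\}, x) = h(\Delta,x)$. Substituting into the first identity yields $x^n h(\Delta,1/x) = h(\inte(\Delta),x)$, as claimed. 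The one subtlety to handle carefully is the degenerate boundary piece: one must check that $\bar\Gamma$ as defined (the subcomplex whose facets are the facets of $\partial\Delta$ not in $\Gamma$, of which there are none) is the empty complex $\{\emptyset\}$ and that the conventions for the $h$-polynomial of a relative complex with $\Gamma = \partial\Delta$ give $f(\inte\Delta, x) = \sum_i f_{i-1}(\inte\Delta) x^i$ with $f_{-1}(\inte\Delta) = 0$, consistent with $\inte\Delta$ having no empty face. I expect the main obstacle to be purely expository: stating precisely enough of Stanley's Lemma~6.2 (in particular the role of the separating sphere $\Gamma \cap \bar\Gamma$ and the manifold-with-boundary hypotheses) to make the specialization airtight, rather than any genuinely new computation.
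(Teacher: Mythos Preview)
The paper does not supply a proof of this proposition: it is stated as a special case of \cite[Lemma~6.2]{Sta87} and simply cited. Your plan to invoke that lemma directly is therefore exactly what the paper does, and for the first displayed identity there is nothing to add.

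Your derivation of the ``in particular'' clause, however, has a genuine gap. You propose to substitute $\Gamma = \partial\Delta$ into the first identity, but the hypothesis there requires $\Gamma$ to be homeomorphic to an $(n-2)$-dimensional \emph{ball}, whereas $\partial\Delta$ is an $(n-2)$-dimensional \emph{sphere}; the substitution is not licensed by the first part as stated. You also waver on whether $\bar\Gamma$ is the void complex or the complex $\{\varnothing\}$: these differ by the empty face, and $h(\Delta/\{\varnothing\},x) = h(\Delta,x) - (1-x)^n \ne h(\Delta,x)$, so the distinction matters. The correct reading is that the ``in particular'' case is itself a (degenerate) specialization of Stanley's more general lemma---which does cover the pair $(\Gamma,\bar\Gamma) = (\partial\Delta,\varnothing)$---rather than a formal consequence of the first displayed identity in the proposition. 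If you want to stay within the proposition as stated, you would instead need an additional short argument (for example, coning $\Delta$ at a boundary point to reduce to a situation where the relevant $\Gamma$ is a ball), not a direct plug-in.
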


Consider two simplicial complexes 
$\Delta$ and $\Delta'$. We say that $\Delta'$ is a 
\emph{triangulation} of $\Delta$ if there exist 
geometric realizations $K'$ and $K$ of $\Delta'$ and 
$\Delta$, respectively, such that $K'$ geometrically 
subdivides $K$. Given a simplex $L \in K$ with 
corresponding face $F \in \Delta$, the triangulation 
$K'$ naturally restricts to a triangulation $K'_L$ of 
$L$. The subcomplex $\Delta'_F$ of $\Delta'$ 
corresponding to $K'_L$ is a triangulation of the 
abstract simplex $2^F$, called the \emph{restriction} 
of $\Delta'$ to $F$. The \emph{carrier} of a face 
$G \in \Delta'$ is defined as the smallest face $F 
\in \Delta$ such that $G \in \Delta'_F$. 

Associated to the restrictions $\Delta'_F$ are 
certain enumerative invariants, called local 
$h$-polynomials. Given any triangulation $\Gamma$ of 
an $(n-1)$-dimensional simplex with vertex set $V$, 
the \emph{local $h$-polynomial} of $\Gamma$ (with
respect to $V$) is defined 
\cite[Definition~2.1]{Sta92} by the formula 
\[ \ell_V (\Gamma, x) \ = \sum_{F \subseteq V} 
  \, (-1)^{n - |F|} \, h (\Gamma_F, x). \]
By the principle of inclusion-exclusion, we have 
\[ h (\Gamma, x) \ = \sum_{F \subseteq V} 
  \ell_F (\Gamma_F, x). \]
The polynomial $\ell_V (\Gamma, x)$ is 
symmetric, with center of symmetry $n/2$, and has
nonnegative coefficients; it plays a fundamental 
role in the enumerative theory of simplicial 
subdivisions \cite{Sta92} 
\cite[Section~III.10]{StaCCA} and in the proof of 
Theorem~\ref{thm:mainA} as well.

\section{Uniform triangulations}
\label{sec:uniform}

This section introduces uniform triangulations, fixes 
related notation and terminology and discusses basic 
properties and the main examples of interest.

Let us fix a number $d \in \NN \cup \{ \infty \}$ once
and for all; for the main applications, one can always 
take $d = \infty$. An \emph{$f$-triangle} of size $d$
will be a triangular array $\fF = (f_\fF(i,j))_{0 \le i 
\le j \le d}$ of nonnegative integers (where $i, j$ are
finite numbers). We will say that
\begin{equation}
\label{eq:f(sigma)}
f_\fF(\sigma_n, x) \ := \ \sum_{i=0}^n f_\fF(i,n) x^i
\end{equation} 
is the $n$th \emph{$f$-polynomial} associated to $\fF$.
\begin{definition} \label{def:uniform} 
Let $\fF$ be an $f$-triangle of size $d$ and $\Delta$ 
be a simplicial complex of dimension less than $d$. A 
triangulation $\Delta'$ of $\Delta$ is called 
\emph{$\fF$-uniform} if $f(\Delta'_F, x) = f_\fF 
(\sigma_n,x)$ for every $(n-1)$-dimensional face 
$F \in \Delta$ and all $n \le d$. 
\end{definition}

Equivalently, we require that for all $0 \le i \le j 
\le d$, the restriction of $\Delta'$ to any face of 
$\Delta$ of dimension $j-1$ has exactly $f_\fF(i,j)$ 
faces of dimension $i-1$. This paper is not concerned
with the problem to determine necessary and sufficient
conditions on $\fF$, so that $\fF$-uniform 
triangulations of $(n-1)$-dimensional simplices exist
for all $n \le d$. We will then say that $\fF$ is 
\emph{feasible}. Most statements in this
paper are vacuously true for non-feasible triangles 
$\fF$. As the notation in Equation~(\ref{eq:f(sigma)}) 
suggests, when we talk about the face enumeration of 
an $\fF$-uniform triangulation of $\sigma_n$, we will 
mean that of the restriction of $\Delta'$ to any 
$(n-1)$-dimensional face of $\Delta$. 

Clearly, if a 
triangulation $\Delta'$ of $\Delta$ is $\fF$-uniform, 
then so is the restriction of $\Delta'$ to any 
subcomplex of $\Delta$. 

\subsection{Face triangles}
\label{sec:triangles}

A feasible $f$-triangle $\fF$ of size $d$ gives rise 
to the \emph{$h$-polynomials, interior $f$-polynomials, 
interior $h$-polynomials} and \emph{local $h$-polynomials}
\begin{eqnarray*}
h_\fF(\sigma_n, x) & = & \sum_{i=0}^n h_\fF(i,n) x^i 
\label{eq:h(sigma)}, \\
f^\circ_\fF(\sigma_n, x) & = & \sum_{i=0}^n 
f^\circ_\fF(i,n) x^i \label{eq:fcirc(sigma)}, \\
h^\circ_\fF(\sigma_n, x) & = & \sum_{i=0}^n 
h^\circ_\fF(i,n) x^i \label{eq:hcirc(sigma)}, \\
\ell_\fF(\sigma_n, x) & = & \sum_{i=0}^n \ell_\fF(i,n) 
x^i \label{eq:ell(sigma)} 
\end{eqnarray*} 
for $0 \le n \le d$, respectively. These are 
defined as the $h$-polynomial, interior $f$-polynomial, 
interior $h$-polynomial and local $h$-polynomial of 
any $\fF$-uniform triangulation of the simplex $\sigma_n$. 
They can be represented by the corresponding
$h$-triangle $\hH = (h_\fF(i,j))$, interior $f$-triangle 
$\fF^\circ = (f^\circ_\fF(i,j))$, interior $h$-triangle 
$\hH^\circ = (h^\circ_\fF(i,j))$ and local $h$-triangle 
$\lL = (\ell_\fF(i,j))$ of size $d$, respectively. For
example, $f^\circ_\fF(i,n)$ is equal to the number of 
interior $(i-1)$-dimensional faces of any $\fF$-uniform 
triangulation of $\sigma_n$ (or of any other simplex of
the same dimension). 
These triangles determine each other, since the 
corresponding polynomials are related by the invertible
transformations

\begin{eqnarray}
h_\fF(\sigma_n, x) & = & (1-x)^n 
f_\fF(\sigma_n, \frac{x}{1-x}), \label{eq:hf(sigma)} \\
h^\circ_\fF(\sigma_n, x) & = & 
x^n h_\fF(\sigma_n, \frac{1}{x}), \label{eq:hhcirc(sigma)} \\
f^\circ_\fF(\sigma_n, x) & = & (1+x)^n h^\circ_\fF
(\sigma_n, \frac{x}{1+x}), \label{eq:fcirchcirc(sigma)}\\
\ell_\fF(\sigma_n, x) & = & \sum_{k=0}^n (-1)^{n-k} 
{n \choose k} h_\fF(\sigma_k, x). \label{eq:ellh(sigma)}
\end{eqnarray} 

\medskip
These equalities define the triangles $\hH$, $\fF^\circ$, 
$\hH^\circ$ and $\lL$ even when $\fF$ may not be feasible.
The second follows from the last sentence of 
Proposition~\ref{prop:hsymmetry} and yields the relation
\begin{equation}
\label{eq:ffcirc}
f^\circ_\fF(\sigma_n, -1-x) \ = \ (-1)^n 
f_\fF(\sigma_n, x)
\end{equation} 
between the $f$-polynomials and interior $f$-polynomials 
associated to $\fF$.
\begin{example} \label{ex:d=2} \rm
Let $d=2$ and suppose that $\fF$ comes from the uniform
triangulation which subdivides any 1-simplex into $r$
such simplices, by inserting $r-1$ interior vertices. 
Then, the various triangles we have defined are given by:

\medskip
\begin{itemize}
\itemsep=0pt
\item[$\bullet$]
$f_\fF(\sigma_0, x) = 1, f_\fF(\sigma_1, x) = 1+x, 
f_\fF(\sigma_2, x) = 1 + (r+1)x + rx^2$,

\item[$\bullet$]
$f^\circ_\fF(\sigma_0, x) = 1, f^\circ_\fF(\sigma_1, x) 
= x, f^\circ_\fF(\sigma_2, x) = (r-1)x + rx^2$,

\item[$\bullet$]
$h_\fF(\sigma_0, x) = h_\fF(\sigma_1, x) = 1, 
h_\fF(\sigma_2, x) = 1 + (r-1)x$,

\item[$\bullet$]
$h^\circ_\fF(\sigma_0, x) = 1, h^\circ_\fF(\sigma_1, x) 
= x, h^\circ_\fF(\sigma_2, x) = (r-1)x + x^2$, and

\item[$\bullet$]
$\ell_\fF(\sigma_0, x) = 1, \ell_\fF(\sigma_1, x) = 0,
\ell_\fF(\sigma_2, x) = (r-1)x$.
\end{itemize}
\end{example}

\subsection{Examples}
\label{sec:examples}

We now briefly review important examples of 
uniform triangulations which have already been studied 
in the literature, in terms of their face enumeration,
and have provided much of the motivation behind this 
paper. The corresponding $f$-triangles can be considered 
to be of infinite size.

At the core of our examples lie barycentric and 
edgewise subdivisions. Let $\Delta$ be a simplicial 
complex of dimension $n-1$ with vertex set $V(\Delta)$ 
and $r$ be a positive integer. The \emph{barycentric 
subdivision} of $\Delta$ is denoted by $\sd(\Delta)$ 
and defined as the simplicial complex of all chains in 
the poset of nonempty faces of $\Delta$. The edgewise 
subdivision depends on $r$ and a linear ordering of 
$V(\Delta)$ (although its face vector is independent 
of the latter). Given such an ordering 
$v_1, v_2,\dots,v_m$, denote by $V_r(\Delta)$ the set 
of maps $f: V(\Delta) \to \NN$ such that $\supp(f) \in 
\Delta$ and $f(v_1) + f(v_2) + \cdots + f(v_m) = r$, 
where $\supp(f)$ is the set of all $v \in V(\Delta)$ 
for which $f(v) \ne 0$. For $f \in V_r(\Delta)$, let 
$\iota(f): V(\Delta) \to \NN$ be the map defined by 
setting $\iota(f)(v_j) = f(v_1) + f(v_2) + 
\cdots + f(v_j)$ for $j \in \{1, 2,\dots,m\}$. The 
\emph{$r$-fold edgewise subdivision} of $\Delta$, 
denoted by $\esd_r(\Delta)$, is the simplicial complex  
on the vertex set $V_r(\Delta)$ of which a set 
$E \subseteq V_r(\Delta)$ is a face if the following 
two conditions are satisfied:

\begin{itemize}
\itemsep=0pt
\item[$\bullet$]
$\bigcup_{f \in E} \, \supp(f) \in \Delta$ and

\item[$\bullet$]
$\iota(f) - \iota(g) \in \{0, 1\}^{V(\Delta)}$, or $\iota(g) 
- \iota(f) \in \{0, 1\}^{V(\Delta)}$, for all $f, g \in E$.
\end{itemize}

The simplicial complexes $\sd(\Delta)$ and $\esd_r(\Delta)$ 
can be realized as triangulations of $\Delta$. This is 
elementary and well known for the former, but slightly less 
obvious for the latter; see \cite[Section~5]{Ath20} and 
references therein. The 4-fold edgewise subdivision of the 
2-simplex is shown on Figure~1; it will be used as a 
running example in this paper.  

\begin{figure}
\begin{center}
\begin{tikzpicture}[scale=0.85]
\label{fg:4fold}

  \draw(0,0) -- (4,0) -- (4,4) -- (0,0);
  \draw(1,1) -- (4,1) -- (3,0) -- (3,3) 
             -- (4,3) -- (1,0) -- (1,1); 
  \draw(2,2) -- (4,2) -- (2,0) -- (2,2); 

\end{tikzpicture}
\caption{The 4-fold edgewise subdivision of the 
2-simplex}
\end{center}
\end{figure}
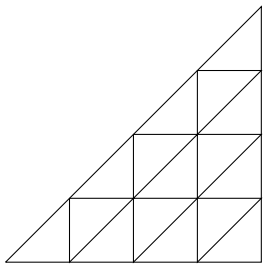

Barycentric and edgewise subdivisions can be 
combined to form the \emph{$r$-colored barycentric 
subdivision} of $\Delta$. This triangulation is
defined as the $r$-fold edgewise subdivision of 
$\sd(\Delta)$; it was introduced in \cite{Ath14} 
in order to partially interpret geometrically 
the derangement polynomial for the colored 
permutation group $\ZZ_r \wr \fS_n$. The 
enumerative combinatorics of the $r$-colored 
barycentric subdivision relates to that of 
$r$-colored permutations, just as the enumerative 
combinatorics of barycentric subdivision relates 
to that of usual (uncolored) permutations; see 
\cite{Ath14} \cite[Section~5]{Ath20}. Clearly, it
reduces to $\sd(\Delta)$ for $r=1$. As explained
in \cite[Remark~4.5]{Ath16a}, for $r=2$ it has 
the same $f$-vector as another interesting
triangulation of $\Delta$, namely the 
\emph{interval triangulation}. As a simplicial
complex, the latter consists of all chains of 
nonempty closed intervals in the poset of nonempty 
faces of $\Delta$; it can also be described as a 
cubical (or signed) analogue of barycentric 
subdivision. The face enumeration of the interval 
triangulation was studied in~\cite{Sav13}, when 
$\Delta$ is a simplex (see also 
\cite[Section~4]{Ath16a} \cite[Section~3.3]{Ath18} 
\cite[Section~5]{Ath20a}), and in \cite{AN20} 
for arbitrary $\Delta$.

The transformation of the $h$-vector of $\Delta$ 
under barycentric, edgewise and interval subdivision
was studied in \cite{BW08, BW09, AN20}, respectively,
where combinatorial interpretations of the 
coefficients which appear in 
Equation~(\ref{eq:mainA}) were found. By a simple 
application of the recurrence (see 
Proposition~\ref{prop:coef}) for these coefficients,
we generalize the interpretations in the first and 
third cases to that of the $r$-colored barycentric 
subdivision in Section~\ref{sec:transform} (see 
Proposition~\ref{prop:csdr-coef}). Useful
formulas for the $h$-polynomial of the barycentric 
and edgewise subdivisions of an $(n-1)$-dimensional 
simplicial complex $\Delta$ are
\begin{equation}
\label{eq:BW08}
\sum_{m \ge 0} \left( \, \sum_{i=0}^n h_i(\Delta) 
m^i(m+1)^{n-i} \right) x^m \ = \ 
\frac{h(\sd(\Delta), x)}{(1-x)^{n+1}}
\end{equation} 
(see \cite[Equation~(3.5)]{BW08}) and
\begin{equation}
\label{eq:BW09}
h(\esd_r(\Delta), x) \ = \ \left(
   (1 + x + x^2 + \cdots + x^{r-1})^n h(\Delta, x) 
	 \right)^{\langle r,0 \rangle} 
\end{equation}
(see \cite[Section~4]{Ath14} and references therein), 
where we have used the standard notation
\[ g(x) \ = \ g^{\langle r,0 \rangle}(x^r) + x 
g^{\langle r,1 \rangle}(x^r) + \cdots	+ x^{r-1} 
g^{\langle r,r-1 \rangle}(x^r) \]
for $g(x) \in \RR[x]$. The main results 
of~\cite{BW08} and~\cite{Jo18}, respectively, show 
that barycentric subdivision transforms $h$-polynomials 
with nonnegative coefficients to polynomials with only 
real roots and that the $r$-fold edgewise subdivision 
has this property, provided $r$ is larger than the 
dimension of $\Delta$. We deduce these results from 
Theorem~\ref{thm:mainB} in Section~\ref{sec:apps}.
The fact that the $r$-colored barycentric subdivision
has the same property for every $r$ follows from the
result of~\cite{BW08} for barycentric subdivisions
and \cite[Theorem~4.5.6]{Bre89} or 
\cite[Corollary~3.4]{Zh20} (see 
Proposition~\ref{prop:csdr}).

All triangulations of $\Delta$ we have discussed 
here are uniform, since their restrictions to 
faces of $\Delta$ are triangulations of the same 
kind. For more information on their combinatorics (in
particular, for combinatorial interpretations of 
their local $h$-polynomials),
see \cite[Section~4]{Ath16a} \cite[Section~3.3]{Ath18} 
and references therein. Another very interesting 
and motivating example of uniform triangulation, 
namely the antiprism triangulation
\cite[Section~Appendix~A]{IJ03}, has been studied 
more recently in~\cite{ABK20+}.

\section{Transformation of the $h$-vector}
\label{sec:transform}

This section proves the following more detailed 
version of Theorem~\ref{thm:mainA}, which describes 
the transformation of the $h$-vector of a simplicial 
complex under uniform triangulation, and studies
the coefficients which appear in 
Equation~(\ref{eq:mainA}). A combinatorial 
interpretation of these coefficients is deduced from 
the main recurrence they satisfy, in the special case 
of the $r$-colored barycentric subdivision. 
\begin{theorem} \label{thm:mainAB} 
Let $\fF$ be an $f$-triangle of size $d \in \NN \cup 
\{ \infty\}$. For $n, k \in \{0, 1,\dots,d\}$ with 
$k \le n$, there exist polynomials 
\begin{equation}
\label{eq:pdef}
p_{\fF,n,k}(x) \ = \ \sum_{j=0}^n p_\fF (n,k,j) x^j 
\end{equation} 
with nonnegative integer 
coefficients, such that the following holds for all 
$n \le d$: the $h$-polynomial of any $\fF$-uniform 
triangulation of any $(n-1)$-dimensional simplicial 
complex $\Delta$ is equal to
\begin{equation}
\label{eq:hFdef}
h_\fF(\Delta, x) \ := \ \sum_{k=0}^n h_k(\Delta)
p_{\fF,n,k} (x).
\end{equation} 
The explicit formula 
\begin{equation}
\label{eq:pformula}
p_{\fF,n,k}(x) \ = \ \sum_{r=0}^n \ell_\fF(\sigma_r,x) 
\sum_{i=0}^r {n-k \choose i}{k \choose r-i} x^{k-r+i}
\end{equation} 
holds for all $n, k$.
\end{theorem}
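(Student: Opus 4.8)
The plan is to start from the identity $h(\Delta',x)=\sum_{F\in\Delta}\ell_F(\Delta'_F,x)$, which expresses the $h$-polynomial of a triangulation as a sum of local $h$-polynomials of its restrictions, and to group the faces $F$ of $\Delta$ by dimension. Since $\Delta'$ is $\fF$-uniform, $\ell_F(\Delta'_F,x)=\ell_\fF(\sigma_r,x)$ for every $(r-1)$-dimensional face $F$, so
\[
h_\fF(\Delta,x)\ =\ \sum_{r=0}^{n} f_{r-1}(\Delta)\,\ell_\fF(\sigma_r,x).
\]
Thus everything reduces to re-expressing the $f$-numbers $f_{r-1}(\Delta)$ of $\Delta$ in terms of its $h$-numbers $h_k(\Delta)$, via the standard inversion $f_{r-1}(\Delta)=\sum_{k=0}^{r}\binom{n-k}{r-k}h_k(\Delta)$ that follows from the definition $f(\Delta,x)=(1+x)^n h(\Delta,x/(1+x))$ (equivalently, reading off coefficients in $\sum_i h_i x^i(1-x)^{n-i}$). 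Substituting and switching the order of summation gives a candidate for $p_{\fF,n,k}(x)$; the task is then to check it matches the asserted formula (\ref{eq:pformula}) and has nonnegative integer coefficients.

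The combinatorial heart is the binomial bookkeeping. After substitution one gets $p_{\fF,n,k}(x)=\sum_{r=k}^{n}\binom{n-k}{r-k}\ell_\fF(\sigma_r,x)$ at first glance, but that is not yet in the claimed form because (\ref{eq:pformula}) carries the extra factor $\sum_{i}\binom{n-k}{i}\binom{k}{r-i}x^{k-r+i}$. The reconciliation uses the symmetry of the local $h$-polynomial: $\ell_\fF(\sigma_r,x)$ is symmetric of degree $r$ with center $r/2$, so $x^r\ell_\fF(\sigma_r,1/x)=\ell_\fF(\sigma_r,x)$. Exploiting this, one rewrites a sum over $r$ of $\binom{n-k}{r-k}\ell_\fF(\sigma_r,x)$ by reindexing and pairing terms $r$ and $n-\cdots$, absorbing the ``reversal'' of the $\ell$'s into the Vandermonde-type identity $\sum_{i}\binom{n-k}{i}\binom{k}{r-i}=\binom{n}{r}$. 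The precise route: plug $\ell_\fF(\sigma_r,x)=\sum_s \ell_\fF(s,r)x^s$ into the naive expression, use $\ell_\fF(s,r)=\ell_\fF(r-s,r)$ to replace $x^s$ by $x^{r-s}$ in half the terms, collect the coefficient of each $\ell_\fF(s,r)x^{?}$, and verify it equals the inner binomial sum in (\ref{eq:pformula}). This is a finite computation with Vandermonde's identity and the symmetry relation; no analysis is involved.

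Nonnegativity of the coefficients $p_\fF(n,k,j)$ is then immediate from the formula: $\ell_\fF(\sigma_r,x)$ has nonnegative coefficients by Stanley's theorem (recalled in Section~\ref{sec:pre}), the binomial products $\binom{n-k}{i}\binom{k}{r-i}$ are nonnegative integers, and the exponent $x^{k-r+i}$ is a genuine monomial (one checks $0\le k-r+i$ whenever $\binom{k}{r-i}\neq 0$, i.e.\ $r-i\le k$, so the exponent is $\ge 0$; likewise it is $\le n$). Integrality is clear since all quantities are integers. Finally, uniqueness of the $p_{\fF,n,k}(x)$ follows by specializing $\Delta$ to complexes whose $h$-vectors span $\RR^{n+1}$ (for instance, take $\Delta=\sigma_n$, boundaries of simplices, and near-cones, or simply invoke that for every vector there is a simplicial complex—e.g.\ a multicomplex/Cohen--Macaulay witness—with that $h$-vector, or argue purely formally that (\ref{eq:hFdef}) must hold identically in the $h_k$ since the left side is determined by the $f$-vector which is a fixed linear function of the $h$-vector).

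The main obstacle I anticipate is purely notational: carrying out the reindexing that converts $\sum_{r\ge k}\binom{n-k}{r-k}\ell_\fF(\sigma_r,x)$ into the double sum of (\ref{eq:pformula}) without sign errors, since it hinges on using the symmetry of $\ell_\fF(\sigma_r,x)$ in a slightly asymmetric way (reversing only to pick up the right power of $x$) and on recognizing the resulting coefficient as a Vandermonde convolution. Once the correct substitution and the symmetry identity are in hand, the rest is mechanical; the conceptual input (the local $h$-decomposition of $h(\Delta',x)$ and the nonnegativity/symmetry of local $h$-polynomials) is entirely supplied by the preliminaries.
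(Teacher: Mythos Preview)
Your starting identity is incorrect. The formula $h(\Delta',x)=\sum_{F\in\Delta}\ell_F(\Delta'_F,x)$ recalled in Section~\ref{sec:pre} holds only when $\Delta$ is a \emph{simplex} (it is the inversion of the defining relation for $\ell_V$). For a general simplicial complex $\Delta$, Stanley's locality formula \cite[Theorem~3.2]{Sta92} reads
\[
h(\Delta',x)\ =\ \sum_{F\in\Delta}\ell_F(\Delta'_F,x)\,h(\link_\Delta(F),x),
\]
with the link factor present. Dropping it yields your intermediate expression $p_{\fF,n,k}(x)=\sum_{r\ge k}\binom{n-k}{r-k}\ell_\fF(\sigma_r,x)$, which already fails for $n=k=1$: it gives $\ell_\fF(\sigma_1,x)=0$, whereas $p_{\fF,1,1}(x)=x$ (Example~\ref{ex:d=3}). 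The ``reconciliation'' you propose via the symmetry of $\ell_\fF(\sigma_r,x)$ therefore cannot succeed: the two expressions are genuinely unequal, not merely written differently.

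With the link factor restored, your approach does work and runs parallel to the paper's. One computes
\[
\sum_{|F|=r} h(\link_\Delta(F),x)\ =\ \sum_{m\ge r}\binom{m}{r}f_{m-1}(\Delta)\,x^{m-r}(1-x)^{n-m},
\]
substitutes $f_{m-1}(\Delta)=\sum_k\binom{n-k}{m-k}h_k(\Delta)$, and is led precisely to the binomial identity of Lemma~\ref{lem:identity}; nonnegativity then follows from that of $\ell_\fF(\sigma_r,x)$, exactly as you say. The paper takes a slightly different route to the same lemma: it starts from the face-count identity $f_{j-1}(\Delta')=\sum_m f_{m-1}(\Delta)\,f^\circ_\fF(j,m)$, passes to $h$-polynomials via the reciprocity $h^\circ_\fF(\sigma_m,x)=x^m h_\fF(\sigma_m,1/x)$, and only at the end converts $h_\fF(\sigma_m,1/x)$ into local $h$-polynomials using their symmetry before invoking Lemma~\ref{lem:identity}.
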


\begin{example} \label{ex:d=3} \rm
For $d \ge 2$ we have $\ell_\fF(\sigma_0, x) = 1$, 
$\ell_\fF(\sigma_1, x) = 0$ and $\ell_\fF(\sigma_2, x) 
= (r-1)x$, where $r-1$ is the number of interior 
vertices of any $\fF$-uniform triangulation of the
1-simplex. Equation~(\ref{eq:pformula}) yields that 
$p_{\fF,0,0} (x) = p_{\fF,1,0} (x) = 1$, $p_{\fF,1,1} 
(x) = x$ and 
\[ p_{\fF,2,k} (x) \ = \ \begin{cases}
    1 + (r-1)x, & \text{if $k=0$} \\
    rx, & \text{if $k=1$} \\
    (r-1)x + x^2, & \text{if $k=2$}. \end{cases} \]

Letting $d = 3$ and assuming that $\fF$ is the
$f$-triangle for the edgewise subdivision of Figure~1,
we also have $\ell_\fF(\sigma_2, x) = 3x$ and 
$\ell_\fF(\sigma_3, x) = 3x + 3x^2$ and compute that 
\[ p_{\fF,3,k} (x) \ = \ \begin{cases}
    1 + 12x + 3x^2, & \text{if $k=0$} \\
    10x + 6x^2, & \text{if $k=1$} \\
    6x + 10x^2, & \text{if $k=2$} \\
    3x + 12x^2 + x^3, & \text{if $k=3$}. \end{cases} \]
\end{example}

\medskip
Before proceeding with the proof of 
Theorem~\ref{thm:mainAB}, we establish the following 
combinatorial identity which will be used there. We 
adopt the standard convention about binomial coefficients 
that ${n \choose k} := 0$, if $k$ is not in the range 
$0 \le k \le n$.
\begin{lemma} \label{lem:identity} 
We have
\[ \sum_{m=0}^n {m \choose r}{n-k \choose m-k} 
x^{m-r} (1-x)^{n-m} \ = \ \sum_{i=0}^r 
{n-k \choose i}{k \choose r-i} x^{k-r+i} \]
for every $n \in \NN$ and all $r, k \in \{0, 1,\dots,n\}$.
\end{lemma}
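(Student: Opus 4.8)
The plan is to prove this as a polynomial identity in $x$ by recognizing both sides as coefficient extractions (or generating-function identities) in an auxiliary variable. First I would rewrite the left-hand side by expanding $(1-x)^{n-m}$ via the binomial theorem and interchanging the order of summation, but a cleaner route is to interpret $\sum_{m} \binom{n-k}{m-k} x^{m-k}(1-x)^{(n-k)-(m-k)}$-type pieces as arising from $\bigl((1-x)+x\bigr)^{n-k}$ with a marked statistic. Concretely, I would introduce an indeterminate $t$ and consider the generating function $\sum_{m=0}^n \binom{m}{r}\binom{n-k}{m-k} t^{m}$; the factor $\binom{m}{r}$ suggests differentiating, or better, using the Vandermonde-type split $\binom{m}{r} = \sum_{i} \binom{m-k}{i}\binom{k}{r-i}$ (valid by the Chu--Vandermonde identity, since $m = (m-k)+k$). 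Substituting this into the left-hand side and swapping sums gives
\[
\sum_{i=0}^r \binom{k}{r-i} \sum_{m=0}^n \binom{m-k}{i}\binom{n-k}{m-k} x^{m-r}(1-x)^{n-m}.
\]

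Next I would fix $i$ and evaluate the inner sum. Writing $\ell = m-k$, it becomes
\[
\binom{k}{r-i}\, x^{k-r}\sum_{\ell=0}^{n-k} \binom{\ell}{i}\binom{n-k}{\ell} x^{\ell}(1-x)^{(n-k)-\ell}.
\]
The sum $\sum_{\ell}\binom{\ell}{i}\binom{n-k}{\ell} x^{\ell}(1-x)^{(n-k)-\ell}$ is a standard identity: using $\binom{\ell}{i}\binom{n-k}{\ell} = \binom{n-k}{i}\binom{n-k-i}{\ell-i}$, it collapses to $\binom{n-k}{i} x^{i}\sum_{\ell\ge i}\binom{n-k-i}{\ell-i} x^{\ell-i}(1-x)^{(n-k-i)-(\ell-i)} = \binom{n-k}{i} x^{i}\bigl(x+(1-x)\bigr)^{n-k-i} = \binom{n-k}{i} x^{i}$. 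Plugging this back in yields $\binom{k}{r-i}\binom{n-k}{i} x^{k-r+i}$, and summing over $i$ from $0$ to $r$ recovers exactly the right-hand side.

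The main obstacle, such as it is, is purely bookkeeping: making sure the ranges of summation and the convention $\binom{a}{b}=0$ for $b\notin[0,a]$ are handled consistently when I re-index, so that no spurious boundary terms appear and the Chu--Vandermonde split $\binom{m}{r}=\sum_{i=0}^{r}\binom{m-k}{i}\binom{k}{r-i}$ is applied only where $m\ge k$ (the terms with $m<k$ vanish on both sides because of the factor $\binom{n-k}{m-k}$). I would therefore state at the outset that all sums may be taken over $m\in\ZZ$ (resp.\ $\ell,i\in\ZZ$) with the zero convention, which makes every interchange of summation and every collapse-by-Vandermonde step formally routine. Alternatively, one could give a one-line proof by a lattice-path or ballot-sequence bijection — both sides count pairs consisting of a choice encoded by the binomial products with a weight tracking $x$ — but the generating-function computation above is self-contained and needs nothing beyond Chu--Vandermonde and the binomial theorem, so that is the route I would write up.
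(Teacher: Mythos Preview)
Your proposal is correct and follows essentially the same route as the paper: apply the Chu--Vandermonde identity to split $\binom{m}{r}$ as $\sum_i \binom{m-k}{i}\binom{k}{r-i}$, reindex via $\ell=m-k$, use the trinomial-revision identity $\binom{\ell}{i}\binom{n-k}{\ell}=\binom{n-k}{i}\binom{n-k-i}{\ell-i}$, and collapse the inner sum with the binomial theorem. The only difference is cosmetic ordering (the paper shifts $m\mapsto m+k$ before applying Vandermonde rather than after), and your remarks on the zero-convention for binomials handle the bookkeeping exactly as needed.
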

\begin{proof}
Let us denote the left-hand side by $L(n,k,r)$. Shifting 
the index $m$ to $m+k$ and using the identity 
${m+k \choose r} = \sum_{i=0}^r {m \choose i}
{k \choose r-i}$, we get
\begin{eqnarray*}
L(n,k,r) & = & \sum_{m=0}^{n-k} {m+k \choose r}
{n-k \choose m} x^{m+k-r} (1-x)^{n-k-m} \\
& = & x^{k-r} \, \sum_{m=0}^{n-k} \left( \, 
\sum_{i=0}^r {m \choose i}{k \choose r-i} \right)
{n-k \choose m} x^m (1-x)^{n-k-m}
\\ & = & x^{k-r} \, \sum_{i=0}^r {k \choose r-i}
\sum_{m=0}^{n-k} {m \choose i}{n-k \choose m} x^m 
(1-x)^{n-k-m}.
\end{eqnarray*}
Using the identity ${m \choose i}{n-k \choose m} = 
{n-k \choose i}{n-k-i \choose m-i}$ and applying the 
binomial theorem shows that the inner sum equals 
${n-k \choose i} x^i$ and the proof follows.
\end{proof}

\medskip
\noindent
\emph{Proof of Theorem~\ref{thm:mainAB}.}
Let $\Delta'$ be an $\fF$-uniform triangulation of an 
$(n-1)$-dimensional simplicial complex $\Delta$.
Using the notation of Section~\ref{sec:triangles}, 
since $\Delta'$ is $\fF$-uniform, for every 
$(m-1)$-dimensional face $F \in \Delta$ there exist
exactly $f^\circ_\fF(j,m)$ faces of $\Delta'$ of 
dimension $j-1$ with carrier $F$. Therefore, 
\begin{equation}
\label{eq:fbasic}
f_{j-1}(\Delta') \ = \ \sum_{m=j}^n f_{m-1}(\Delta)
   \cdot f^\circ_\fF(j,m)
\end{equation} 
for every $j \in \{0, 1,\dots,n\}$ and hence
\begin{eqnarray*}
f(\Delta', x) & = & \sum_{j=0}^n f_{j-1}(\Delta') x^j 
\ = \ \sum_{j=0}^n \left( \, \sum_{m=j}^n f_{m-1}
(\Delta) \cdot f^\circ_\fF(j,m) \right) x^j \\
& = & \sum_{m=0}^n f_{m-1}(\Delta) 
\left( \, \sum_{j=0}^m f^\circ_\fF(j,m) x^j \right) 
\\ & = & \sum_{m=0}^n f_{m-1}(\Delta) \cdot 
f^\circ_\fF(\sigma_m, x).
\end{eqnarray*}
Applying the transformations between $f$-polynomials 
and $h$-polynomials and 
Equation~(\ref{eq:hhcirc(sigma)}), we conclude that

\begin{eqnarray*}
h(\Delta', x) & = & (1-x)^n f(\Delta', \frac{x}{1-x}) 
\\ & = & (1-x)^n \, \sum_{m=0}^n f_{m-1}(\Delta) \cdot 
f^\circ_\fF(\sigma_m, \frac{x}{1-x}) \\ & = & 
\sum_{m=0}^n (1-x)^{n-m} f_{m-1}(\Delta) \cdot 
h^\circ_\fF(\sigma_m, x) \\ & = & \sum_{m=0}^n x^m 
(1-x)^{n-m} f_{m-1}(\Delta) \cdot h_\fF(\sigma_m, 1/x)
\\ & = & \ \sum_{m=0}^n x^m (1-x)^{n-m} h_\fF(\sigma_m, 
1/x) \cdot \sum_{k=0}^m h_k(\Delta) {n-k \choose m-k} 
\\ & = & \sum_{k=0}^n h_k(\Delta) p_{\fF,n,k} (x),
\end{eqnarray*}
where
\[ p_{\fF,n,k} (x) \ := \ \sum_{m=k}^n {n-k \choose m-k}
   x^m (1-x)^{n-m} \, h_\fF(\sigma_m, 1/x). \]

\medskip
\noindent
Expressing $h_\fF(\sigma_m, 1/x)$ in terms of local 
$h$-polynomials and using the symmetry of the latter, we 
conclude further that

\begin{eqnarray*}
p_{\fF,n,k} (x) & = & \sum_{m=k}^n {n-k \choose m-k}
x^m (1-x)^{n-m} \sum_{r=0}^m {m \choose r} 
\ell_\fF(\sigma_r, 1/x) \\ & = & 
\sum_{r=0}^n \ell_\fF(\sigma_r, x) 
\sum_{m=\max\{r,k\}}^n {m \choose r}{n-k \choose m-k} 
x^{m-r} (1-x)^{n-m} \\ & = & 
\sum_{r=0}^n \ell_\fF(\sigma_r, x) \sum_{i=0}^r 
{n-k \choose i}{k \choose r-i} x^{k-r+i},
\end{eqnarray*}

\smallskip
\noindent
where the last equality follows from 
Lemma~\ref{lem:identity}. This computation, together 
with the nonnegativity of the coefficients of the 
polynomials $\ell_\fF(\sigma_r, x)$, imply all claims 
in the statement of the theorem.
\qed

\begin{remark} \label{rem:ffF} \rm
We will write $f_\fF(\Delta, x) := (1+x)^n h_\fF(\Delta, 
x/(1+x))$ for the $f$-polynomial corresponding to 
$h_\fF(\Delta, x)$. Thus, $f_\fF(\Delta, x)$ is equal 
to the $f$-polynomial of any $\fF$-uniform 
triangulation of any $(n-1)$-dimensional simplicial 
complex $\Delta$ and the coefficient of $x^j$ in $f_\fF 
(\Delta, x)$ is equal to the right-hand side of 
Equation~(\ref{eq:fbasic}). 
\qed
\end{remark}
The following corollary generalizes analogous statements 
for barycentric, edgewise and interval subdivisions 
\cite[Section~2]{BW08} \cite[Section~1]{BW09} 
\cite[Section~3]{AN20} to uniform triangulations.
\begin{corollary} \label{cor:mainA} 
Let $\fF$ be an $f$-triangle of size $d$ and $\Delta$ 
be an $(n-1)$-dimensional simplicial complex, for some 
$n \le d$. 

\begin{itemize}
\itemsep=0pt
\item[{\rm (a)}]
If $h(\Delta,x)$ is symmetric with center of symmetry 
$n/2$, then so is $h(\Delta',x)$ for every $\fF$-uniform 
triangulation $\Delta'$ of $\Delta$.

\item[{\rm (b)}]
If $h(\Delta,x)$ has nonnegative coefficients, then the 
inequality $h(\Delta,x) \le h(\Delta',x)$ holds 
coefficientwise for every $\fF$-uniform triangulation 
$\Delta'$ of $\Delta$.
\end{itemize}
\end{corollary}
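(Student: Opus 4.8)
The plan is to read both statements directly off the representation $h_\fF(\Delta,x) = \sum_{k=0}^n h_k(\Delta)\, p_{\fF,n,k}(x)$ established in Theorem~\ref{thm:mainAB}, using the explicit formula~(\ref{eq:pformula}) together with the known symmetry and nonnegativity properties of the local $h$-polynomials $\ell_\fF(\sigma_r,x)$ recalled in Section~\ref{sec:pre}.

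For part~(b), the key observation is that every coefficient $p_\fF(n,k,j)$ is a nonnegative integer (Theorem~\ref{thm:mainAB}), so if $h_k(\Delta)\ge 0$ for all $k$ then each coefficient of $h(\Delta',x)=\sum_k h_k(\Delta)\,p_{\fF,n,k}(x)$ is a nonnegative combination of the $h_k(\Delta)$. It then suffices to check that the ``diagonal'' contribution already dominates $h(\Delta,x)$; concretely, I would isolate the $r=0$ term of~(\ref{eq:pformula}), where $\ell_\fF(\sigma_0,x)=1$ and the inner sum collapses (only $i=0$ survives, giving $\binom{n-k}{0}\binom{k}{0}x^{k}=x^k$), so that $p_{\fF,n,k}(x) = x^k + (\text{a polynomial with nonnegative coefficients})$. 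Hence $\sum_k h_k(\Delta)\,p_{\fF,n,k}(x) \ge \sum_k h_k(\Delta)\,x^k = h(\Delta,x)$ coefficientwise, which is exactly the claim. (One should note that $\ell_\fF(\sigma_1,x)=0$, consistent with the fact that a triangulation of an edge contributes nothing new beyond its vertices to the local $h$-polynomial; this is not needed for the argument but confirms the $r=0$ term is genuinely the relevant one.)

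For part~(a), symmetry of $h(\Delta,x)$ with center $n/2$ means $x^n h(\Delta,1/x)=h(\Delta,x)$; I want to conclude the same identity for $h(\Delta',x)$. The cleanest route is to go back to the line in the proof of Theorem~\ref{thm:mainAB} giving $h(\Delta',x)=\sum_{m=0}^n x^m(1-x)^{n-m} f_{m-1}(\Delta)\, h^\circ_\fF(\sigma_m,x)$ together with $h^\circ_\fF(\sigma_m,x)=x^m h_\fF(\sigma_m,1/x)$, and then expand $h_\fF(\sigma_m,x)$ in terms of the symmetric polynomials $\ell_\fF(\sigma_r,x)$ via~(\ref{eq:ellh(sigma)}). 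Each $\ell_\fF(\sigma_r,x)$ satisfies $x^r\ell_\fF(\sigma_r,1/x)=\ell_\fF(\sigma_r,x)$. Substituting $1/x$ for $x$ in $h(\Delta',x)$, multiplying by $x^n$, and using the symmetry of $f(\Delta,x)$ in the form $x^n f(\Delta,1/x)=$ (the $f$-polynomial whose coefficients are the $h$-symmetric reflection) — more simply, using directly that $h(\Delta',x)=\sum_k h_k(\Delta) p_{\fF,n,k}(x)$ and proving the single polynomial identity $x^n p_{\fF,n,k}(1/x) = p_{\fF,n,n-k}(x)$ for all $k$ — then gives $x^n h(\Delta',1/x) = \sum_k h_k(\Delta) p_{\fF,n,n-k}(x) = \sum_k h_{n-k}(\Delta) p_{\fF,n,k}(x) = \sum_k h_k(\Delta)p_{\fF,n,k}(x) = h(\Delta',x)$, where the middle step reindexes $k\mapsto n-k$ and the next uses the hypothesized symmetry of $h(\Delta)$.

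Thus the work reduces to the polynomial identity $x^n p_{\fF,n,k}(1/x)=p_{\fF,n,n-k}(x)$, and I expect this to be the main obstacle. I would prove it by plugging~(\ref{eq:pformula}) into the left side: $x^n p_{\fF,n,k}(1/x) = \sum_{r=0}^n x^n \ell_\fF(\sigma_r,1/x) \sum_{i=0}^r \binom{n-k}{i}\binom{k}{r-i} x^{-(k-r+i)}$; replacing $\ell_\fF(\sigma_r,1/x)$ by $x^{-r}\ell_\fF(\sigma_r,x)$ and tracking exponents, the power of $x$ multiplying $\ell_\fF(\sigma_r,x)$ becomes $n-r-k+r-i = n-k-i$, which I then want to match against the $r=0,\dots,n$ expansion of $p_{\fF,n,n-k}(x)$ after the substitution $i\mapsto r-i$ (swapping the roles of the two binomial factors, which is symmetric under $k\leftrightarrow n-k$ together with $i\leftrightarrow r-i$). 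This is a routine but slightly fiddly reindexing of a double sum; alternatively, one can bypass it entirely by invoking Proposition~\ref{prop:hsymmetry} applied to a triangulation of a sphere, since if $\Delta$ triangulates a sphere then so does $\Delta'$ and $h(\Delta',x)$ is symmetric by that proposition — but since the corollary is stated for arbitrary $\Delta$ with symmetric $h$-vector, the algebraic identity is the honest proof and the sphere case merely a sanity check.
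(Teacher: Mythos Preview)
Your proposal is correct and follows essentially the same approach as the paper: for part~(a) you reduce to the identity $x^n p_{\fF,n,k}(1/x)=p_{\fF,n,n-k}(x)$ and verify it from formula~(\ref{eq:pformula}) via the symmetry of $\ell_\fF(\sigma_r,x)$ and the reindexing $i\mapsto r-i$, exactly as the paper does (the paper writes the equivalent form $x^n p_{\fF,n,n-k}(1/x)=p_{\fF,n,k}(x)$); for part~(b) you isolate the $r=0$ term $x^k$ in $p_{\fF,n,k}(x)$, which is precisely the paper's argument. Your exploratory remarks about alternative routes (via $f$-polynomials or via spheres and Proposition~\ref{prop:hsymmetry}) are sound asides but, as you note, unnecessary.
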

\begin{proof}
To prove part (a), suppose that $h_k(\Delta) = h_{n-k}
(\Delta)$ for every $k \in \{0, 1,\dots,n\}$. In view 
of Theorem~\ref{thm:mainAB}, we need to 
show that $x^n h_\fF(\Delta, 1/x) = h_\fF(\Delta, x)$. 
Our assumption and Equation~(\ref{eq:hFdef}) imply that
\begin{eqnarray*}
x^n h_\fF(\Delta, 1/x) & = & \sum_{k=0}^n h_k(\Delta)
x^n p_{\fF,n,k} (1/x) \ = \ \sum_{k=0}^n h_{n-k}(\Delta)
x^n p_{\fF,n,n-k} (1/x) \\ & = & \sum_{k=0}^n h_k(\Delta)
x^n p_{\fF,n,n-k} (1/x)
\end{eqnarray*}
and thus, it suffices to verify that $x^n p_{\fF,n,n-k} 
(1/x) = p_{\fF,n,k} (x)$. Indeed, using
Equation~(\ref{eq:pformula}) and the symmetry of the 
polynomials $\ell_\fF(\sigma_r, x)$, we get

\begin{eqnarray*}
x^n p_{\fF,n,n-k} (1/x) & = & x^n \,
\sum_{r=0}^n \ell_\fF(\sigma_r, 1/x) 
\sum_{i=0}^r {k \choose i}{n-k \choose r-i} (1/x)^{n-k-r+i} 
\\ & = & \sum_{r=0}^n x^r \ell_\fF(\sigma_r, 1/x) 
\sum_{i=0}^r {k \choose i}{n-k \choose r-i} x^{k-i} \\
& = & \sum_{r=0}^n \ell_\fF(\sigma_r, x) 
\sum_{i=0}^r {k \choose r-i}{n-k \choose i} x^{k-r+i} \ = \ 
p_{\fF,n,k} (x).
\end{eqnarray*}

\medskip
For part (b), suppose that $h_k(\Delta) \ge 0$ for all 
$k$. By Theorem~\ref{thm:mainAB}, it suffices to show 
that the coefficient of $x^k$ in $p_{\fF,n,k}(x)$ is 
positive for every such $k$. Indeed, since $\ell_\fF
(\sigma_0,x) = 1$, the summand of the right-hand side 
of Equation~(\ref{eq:pformula}) corresponding to $r=0$ 
is equal to $x^k$ and the proof follows. 
\end{proof}

The following statement lists the main properties of
the coefficients $p_\fF(n,k,j)$.
\begin{proposition} \label{prop:coef} 
Let $\fF$ be a feasible $f$-triangle of size $d$ and 
$n \in \{0, 1,\dots,d\}$. 

\begin{itemize}
\itemsep=0pt
\item[{\rm (a)}]
We have $p_\fF(n,k,j) = p_\fF(n,n-k,n-j)$ for all 
$k, j \in \{0, 1,\dots,n\}$.

\item[{\rm (b)}]
The recurrence 
\begin{equation}
\label{eq:recurrence}
p_\fF(n,k,j) \ = \ p_\fF(n,k-1,j) + p_\fF(n-1,k-1,j-1) - 
p_\fF(n-1,k-1,j)
\end{equation} 
holds for all $k, j \in \{0, 1,\dots,n\}$ with $k \ge 
1$. 

\item[{\rm (c)}]
We have $p_\fF(n,0,j) = h_\fF(j,n)$ for every $j \in 
\{0, 1,\dots,n\}$. Equivalently, 
\[ \sum_{j=0}^n p_\fF(n,0,j) x^j \ = \ 
h_\fF(\sigma_n,x). \]

\item[{\rm (d)}]
For every $k \in \{0, 1,\dots,n\}$,
\[ \sum_{j=0}^n p_\fF(n,k,j) \ = \ h_\fF(\sigma_n,1).
\] 
Equivalently, the sum on the left-hand side is 
independent of $k$ and equal to the number of facets 
of any $\fF$-uniform triangulation of the simplex 
$\sigma_n$. 

\item[{\rm (e)}]
For every $j \in \{0, 1,\dots,n\}$,
\[ \sum_{k=0}^n p_\fF(n,k,j) \ = \ (h_\fF)_j (\partial
\sigma_{n+1}). \]
\end{itemize}
\end{proposition}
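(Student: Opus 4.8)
The plan is to derive all five parts from Theorem~\ref{thm:mainAB}, together with the representation
\[ p_{\fF,n,k}(x) \ = \ \sum_{m=k}^n {n-k \choose m-k} x^m (1-x)^{n-m} h_\fF(\sigma_m, 1/x) \ = \ \sum_{m=k}^n {n-k \choose m-k} (1-x)^{n-m} h^\circ_\fF(\sigma_m, x) \]
that was established in the proof of that theorem (the second form uses Equation~(\ref{eq:hhcirc(sigma)})), and with suitable specializations of Equation~(\ref{eq:hFdef}). Specifically, for part (c) I would set $\Delta = \sigma_n$: since the $h$-vector of a simplex is $(1, 0,\dots,0)$, the right-hand side of Equation~(\ref{eq:hFdef}) collapses to $p_{\fF,n,0}(x)$, which therefore equals $h_\fF(\sigma_n, x)$. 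For part (e) I would set $\Delta = \partial\sigma_{n+1}$; since this complex is the boundary of the $n$-simplex, with $f_{i-1} = {n+1 \choose i}$ for $0 \le i \le n$, its $h$-polynomial equals $1 + x + \cdots + x^n$, so Equation~(\ref{eq:hFdef}) gives $h_\fF(\partial\sigma_{n+1}, x) = \sum_{k=0}^n p_{\fF,n,k}(x)$, and the assertion follows by reading off the coefficient of $x^j$.

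For part (d), I would evaluate the displayed representation at $x = 1$: only the $m = n$ summand survives, so $p_{\fF,n,k}(1) = h^\circ_\fF(\sigma_n, 1) = h_\fF(\sigma_n, 1)$ for every $k$, and $h_\fF(\sigma_n, 1)$ is the number of facets of an $\fF$-uniform triangulation of $\sigma_n$ because $h(\Gamma, 1) = f_{n-1}(\Gamma)$ for every pure $(n-1)$-dimensional complex $\Gamma$. Part (a) needs no new computation: the argument in the proof of Corollary~\ref{cor:mainA}(a), based on Equation~(\ref{eq:pformula}) and the symmetry of the local $h$-polynomials $\ell_\fF(\sigma_r, x)$, already establishes $x^n p_{\fF,n,n-k}(1/x) = p_{\fF,n,k}(x)$, which is precisely $p_\fF(n,k,j) = p_\fF(n,n-k,n-j)$ read off coefficientwise.

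The only part calling for an actual calculation is the recurrence in part (b), which in polynomial form reads $p_{\fF,n,k}(x) = p_{\fF,n,k-1}(x) + (x-1)\,p_{\fF,n-1,k-1}(x)$ for $k \ge 1$. I would expand the right-hand side using the representation above: the coefficient of $(1-x)^{n-m} h^\circ_\fF(\sigma_m, x)$ contributed by $p_{\fF,n,k-1}(x)$ is ${n-k+1 \choose m-k+1}$ for $k-1 \le m \le n$, while that contributed by $(x-1)\,p_{\fF,n-1,k-1}(x)$ is $-{n-k \choose m-k+1}$ for $k-1 \le m \le n-1$; by Pascal's rule these sum to ${n-k \choose m-k}$ for every $m$, the $m = k-1$ term vanishes since ${n-k \choose -1} = 0$, and what remains is exactly $p_{\fF,n,k}(x)$. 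I expect the main (modest) obstacle to be nothing more than keeping this binomial bookkeeping straight, in particular at the edge index $m = n$, where ${n-k+1 \choose n-k+1} = {n-k \choose n-k} = 1$; there is no conceptual difficulty. Note finally that parts (b) and (c) together yield an alternative derivation of (d): putting $x = 1$ in (b) shows that $p_{\fF,n,k}(1)$ is independent of $k$, and (c) identifies the common value as $p_{\fF,n,0}(1) = h_\fF(\sigma_n, 1)$.
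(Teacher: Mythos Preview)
Your proposal is correct, and for parts (a), (c), (e) it coincides with the paper's argument (specialize Equation~(\ref{eq:hFdef}) to $\Delta=\sigma_n$ and $\Delta=\partial\sigma_{n+1}$, and quote the symmetry computation from the proof of Corollary~\ref{cor:mainA}).

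For parts (b) and (d) you take genuinely different routes. For (b), the paper does \emph{not} carry out a direct calculation here; it postpones the proof to Section~\ref{sec:operator} (Corollary~\ref{cor:p-char}), where the recurrence is obtained conceptually via the subdivision operator, from the trivial identity $x^k(1+x)^{n-k}=x^{k+1}(1+x)^{n-k-1}+x^k(1+x)^{n-k-1}$ after applying $\eE_\fF$ and passing to $h$-polynomials. Your Pascal-rule computation on the intermediate representation $\sum_{m\ge k}\binom{n-k}{m-k}(1-x)^{n-m}h^\circ_\fF(\sigma_m,x)$ is a valid direct verification and is in fact what the paper alludes to when it says part (b) ``can be verified by direct computation''; the operator proof has the advantage of requiring no bookkeeping, while yours stays entirely within the material already developed. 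For (d), the paper's primary argument evaluates the local-$h$ formula~(\ref{eq:pformula}) at $x=1$ and simplifies via a Vandermonde identity; it then mentions, as you also do, the alternative of summing (b) over $j$ and invoking (c). Your observation that setting $x=1$ in the intermediate formula kills every term with $m<n$ and leaves $h^\circ_\fF(\sigma_n,1)=h_\fF(\sigma_n,1)$ is shorter than either of these and is a nice simplification.
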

\begin{proof}
Part (a) can be restated as $p_{\fF,n,k} (x) = x^n 
p_{\fF,n,n-k} (1/x)$, which has already been shown in 
the proof of Corollary~\ref{cor:mainA}. Part (b) can 
be verified by direct computation; see the comment 
following this proof. Since a more conceptual  
argument is given in Section~\ref{sec:operator} 
(see Corollary~\ref{cor:p-char}), we postpone the 
proof of (b) until then. 

Parts (c) and (e) are direct consequences of 
Equation~(\ref{eq:hFdef}), applied when $\Delta =
\sigma_n$ and $\Delta = \partial\sigma_{n+1}$, 
respectively: we have $h_0(\Delta) = 1$ and $h_k(\Delta) 
= 0$ for $1 \le k \le n$ in the former case, and $h_k
(\Delta) = 1$ for all $0 \le k \le n$ in the latter. 
For part (d), we may compute directly from the 
formula~(\ref{eq:pformula}) that 

\begin{eqnarray*}
\sum_{j=0}^n p_\fF(n,k,j) & = & p_{\fF,n,k}(1) \ = \ 
\sum_{r=0}^n \ell_\fF(\sigma_r, 1) \cdot
\sum_{i=0}^r {n-k \choose i}{k \choose r-i} 
\\ & = & \sum_{r=0}^n \ell_\fF(\sigma_r, 1) \cdot
\frac{k!(n-k)!}{r!(n-r)!} \cdot
\sum_{i=0}^r {r \choose i}{n-r \choose n-k-i} \\
& = & \sum_{r=0}^n \ell_\fF(\sigma_r, 1) \cdot
\frac{k!(n-k)!}{r!(n-r)!} \cdot {n \choose n-k} \\ 
& = & \sum_{r=0}^n {n \choose r} \ell_\fF(\sigma_r, 1) 
\ = \ h_\fF(\sigma_n,1).
\end{eqnarray*}

\medskip
Alternatively, one can sum the recurrence of part (b)
with respect to $j$ and use induction on $k$, together
with the result of part (c).
\end{proof}

The recurrence of part (b) was discovered by J-M.~Brunink 
and M.~Juhnke-Kubitzke in the special case of antiprism 
triangulations (see \cite{ABK20+}) 
and proven by direct computation, based on an explicit 
expression for the coefficients $p_\fF(n,k,j)$ in terms 
of the $\fF$-triangle. 

We now derive a combinatorial interpretation of 
$p_\fF(n,k,j)$ from this recurrence in the special case 
of the $r$-colored barycentric subdivision. Recall that 
an $r$-colored permutation $w \in \ZZ_r \wr \fS_n$ is 
defined as a pair 
$(\tau, \varepsilon)$, where $\tau = (\tau(1), 
\tau(2),\dots,\tau(n))$ is a permutation of 
$\{1, 2,\dots,n\}$, $\varepsilon = (\varepsilon_1, 
\varepsilon_2,\dots,\varepsilon_n) \in \{0, 
1,\dots,r-1\}^n$ and $\varepsilon_i$ is thought of as 
the color assigned to $\tau(i)$. An index $1 \le i \le 
n$ is a \emph{descent} of $w$ if either $\varepsilon_i > 
\varepsilon_{i+1}$, or $\varepsilon_i = \varepsilon_{i+1}$ 
and $\tau(i) > \tau(i+1)$, where $\tau(n+1) := n+1$ and 
$\varepsilon_{n+1} := 0$ (in particular, $n$ is a descent 
of $w$ if and only if $\tau(n)$ has nonzero color). Our 
interpretation reduces to those of 
\cite[Theorem~2.2]{BW08} and \cite[Theorem~3.1]{AN20} 
for $r=1$ and $r=2$, respectively (which are obtained in
\cite{AN20, BW08} by different methods). 
\begin{proposition} \label{prop:csdr-coef} 
Let $\fF$ be the $f$-triangle for the $r$-colored 
barycentric subdivision. Then, $p_\fF(n,k,j)$ is equal
to the number of $r$-colored permutations $w \in \ZZ_r 
\wr \fS_{n+1}$ which have first coordinate of zero color,
$j$ descents and last coordinate of zero color and equal
to $n+1-k$.
\end{proposition}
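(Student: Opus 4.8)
The plan is to identify the count in the statement as the unique array determined by the initial conditions and recurrence of Proposition~\ref{prop:coef}. Set $a:=n+1-k$ and let $q(n,a,j)$ be the number of $w=(\tau,\varepsilon)\in\ZZ_r\wr\fS_{n+1}$ with $\varepsilon_1=0$, $\varepsilon_{n+1}=0$, $\tau(n+1)=a$ and $\des(w)=j$, so that the assertion is $p_\fF(n,k,j)=q(n,n+1-k,j)$. By Proposition~\ref{prop:coef}(c) the initial slice is $p_\fF(n,0,j)=h_\fF(j,n)$, i.e.\ in the reindexed form the boundary value $a=n+1$, and by Proposition~\ref{prop:coef}(b) the recurrence~(\ref{eq:recurrence}) reads
\[
q(n,a,j)\ =\ q(n,a+1,j)+q(n-1,a,j-1)-q(n-1,a,j) \qquad (1\le a\le n),
\]
where $q(n-1,a,j)$ denotes the analogous count one level down, namely the number of $w'\in\ZZ_r\wr\fS_n$ with $\varepsilon'_1=\varepsilon'_n=0$, $\tau'(n)=a$ and $\des(w')=j$. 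These two properties determine the array uniquely: induct on $n$ and, within level $n$, on $a=n+1,n,\dots,1$, treating out-of-range coefficients as $0$. So it suffices to verify them for the combinatorial $q$.

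For the initial condition, when $a=n+1$ the last letter of $w$ is $(n+1,0)$; the maximal value, occurring last with color $0$, is never part of a descent, so deleting it is a descent-preserving bijection onto the set of $w'\in\ZZ_r\wr\fS_n$ whose first letter has color $0$. The known combinatorial description of the $h$-polynomial of the $r$-colored barycentric subdivision of a simplex --- that $h_\fF(\sigma_n,x)=\sum_{w}x^{\des(w)}$, summed over $w\in\ZZ_r\wr\fS_n$ with $\varepsilon_1=0$ (see \cite{Ath14} \cite[Section~5]{Ath20}; it also follows from Equation~(\ref{eq:BW09}) applied to $\esd_r(\sd(\sigma_n))$ and the classical $h(\sd(\sigma_n),x)=\sum_{\tau\in\fS_n}x^{\des(\tau)}$, the case $\Delta=\sigma_n$ of~(\ref{eq:BW08})) --- then gives $q(n,n+1,j)=h_\fF(j,n)=p_\fF(n,0,j)$; the base case $n=0$ is immediate.

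The main work is the recurrence, for which I would use two bijections that track the descent statistic. Let $\phi$ transpose the positions occupied by the values $a$ and $a+1$, each color staying attached to its position. Then $\phi$ is an involution of $\ZZ_r\wr\fS_{n+1}$ that preserves the conditions $\varepsilon_1=\varepsilon_{n+1}=0$ and carries $\{w:\tau(n+1)=a\}$ onto $\{w:\tau(n+1)=a+1\}$. Since no integer lies strictly between $a$ and $a+1$, every comparison entering the definition of $\des$ other than the one straddling the two transposed letters is unaffected; consequently $\phi$ preserves $\des$ \emph{except} on the set $X$ of those $w$ with $\tau(n)=a+1$ and $\varepsilon_n=0$, whose last two letters are $(a+1,0),(a,0)$ and on which the descent at position $n$ becomes an ascent, so that $\des$ decreases by exactly $1$. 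This shows $q(n,a,j)-q(n,a+1,j)=|\{w\in X:\des(w)=j\}|-|\{w\in X:\des(w)=j+1\}|$. Next, deleting the letter $(a+1,0)$ from position $n$ of a $w\in X$ and standardizing the remaining values is a bijection of $X$ onto the set of $w'\in\ZZ_r\wr\fS_n$ with $\varepsilon'_1=\varepsilon'_n=0$ and $\tau'(n)=a$; once more the only comparison lost is the descent at position $n$, so $\des$ decreases by exactly $1$. Hence $|\{w\in X:\des(w)=j\}|=q(n-1,a,j-1)$, and equating the two expressions gives the recurrence. Taking $r=1$ recovers the interpretation of \cite[Theorem~2.2]{BW08} and $r=2$ that of \cite[Theorem~3.1]{AN20}.

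The one genuinely delicate point is the descent bookkeeping in the preceding paragraph: one must check that, under both $\phi$ and the deletion map, the \emph{only} comparison that changes is the one at position $n$ --- this rests on $a$ and $a+1$ being consecutive integers and on the colors in positions $1$ and $n+1$ being pinned to $0$ --- and one should separately dispose of the degenerate instances $a=1$, $a=n$ and $n\le1$, in which some of the positions named above coincide. Everything else is the formal uniqueness argument for the recurrence together with the already-recorded formula for $h_\fF(\sigma_n,x)$.
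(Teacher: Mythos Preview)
Your proof is correct and follows essentially the same approach as the paper's: both verify the initial condition $k=0$ via the known description of $h_\fF(\sigma_n,x)$ (removing the forced last letter $(n+1,0)$), and both verify the recurrence by swapping the values $n+1-k$ and $n+2-k$ while keeping colors attached to positions, with the exceptional set (next-to-last letter equal to $(n+2-k,0)$) handled by the deletion--standardization bijection. Your organization differs only cosmetically---you first apply the swap $\phi$ and then isolate the defect set $X$, whereas the paper first subtracts off the contribution of $X$ on each side and then swaps---but the two bijections and the descent bookkeeping are identical.
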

\begin{proof}
Let $q(n,k,j)$ be the number of $r$-colored permutations,
described in the proposition. By Proposition~\ref{prop:coef} 
(c), $p_\fF(n,0,j)$ is equal to the coefficient of $x^j$ 
in $h_\fF(\sigma_n,x)$. This is known 
\cite[Proposition~5.1~(c)]{Ath20} to equal the number of 
colored permutations $w \in \ZZ_r \wr \fS_n$ which have 
first coordinate of zero color and $j$ descents. This 
number is equal to $q(n,0,j)$. Thus, it
suffices to show that $q(n,k,j)$ satisfies recurrence
(\ref{eq:recurrence}) for $k \ge 1$. Indeed, this can 
rewritten as 
\[ q(n,k,j) - q(n-1,k-1,j-1) \ = \ q(n,k-1,j) - 
   q(n-1,k-1,j). \]

We leave it to the reader to verify that: (a) the 
left-hand side equals the number of colored 
permutations $w \in \ZZ_r \wr \fS_{n+1}$ which have 
first coordinate of zero color, $j$ descents, last 
coordinate of zero color and equal to $n+1-k$ and next 
to last coordinate \emph{not} of zero color and equal 
to $n+2-k$; (b) the right-hand side equals the 
number of colored permutations $w \in \ZZ_r \wr 
\fS_{n+1}$ which have first coordinate of zero color, 
$j$ descents, last coordinate of zero color and equal 
to $n+2-k$ and next to last coordinate \emph{not} of 
zero color and equal to $n+1-k$; and (c) swapping the
positions of $n+1-k$ and $n+2-k$ (while preserving the
colors) sets up a bijection between the two sets of 
permutations in (a) and (b).
\end{proof}

\section{Polynomial operators}
\label{sec:operator}

The subdivision operator (see \cite[Section~4]{Bra06} 
\cite[Section~7.3.3]{Bra15} and references therein) is 
a linear operator on polynomials, closely related to 
the barycentric subdivision operation on simplicial 
complexes, which is important in the study of roots 
of real polynomials. This section generalizes this 
operator in the framework of uniform triangulations
and employs this concept as a tool to prove the main 
recurrence and derive new interpretations of the 
coefficients $p_\fF(n,k,j)$. 
\begin{definition} \label{def:sdop} 
Given an $f$-triangle $\fF$ of size $d$, the 
\emph{$\fF$-subdivision operator} is defined as the 
linear operator $\eE_\fF: \RR_d[x] \to \RR_d[x]$ for 
which 
\[ \eE_\fF(x^n) \ := \ \sum_{k=0}^n f_\fF^\circ(k,n) 
   x^k \ = \ f_\fF^\circ(\sigma_n, x) \]
for every $n \in \{0, 1,\dots,d\}$. 
\end{definition}

By Example~\ref{ex:d=2} we have $\eE_\fF(1) = 1$, 
$\eE_\fF(x) = x$ and $\eE_\fF(x^2) = (r-1)x + rx^2$, 
where $r-1$ is the number of interior vertices of any 
$\fF$-uniform triangulation of a 1-simplex. For the
barycentric subdivision we have the explicit formula
$f_\fF^\circ(k,n) = k! S(n,k)$, where $S(n,k)$ is a 
Stirling number of the second kind, and hence 
$\eE_\fF$ coincides with the usual subdivision 
operator \cite[Section~7.3.3]{Bra15}.

Recall that $f_\fF(\Delta, x)$ stands for the 
$f$-polynomial of any $\fF$-uniform triangulation of
$\Delta$. The following proposition generalizes 
\cite[Lemma~7.3.11]{Bra15} and \cite[Lemma~4.3]{Bra06} 
to uniform triangulations.
\begin{proposition} \label{prop:sdop} 
Let $\fF$ be an $f$-triangle of size $d$ and $n \in 
\{0, 1,\dots,d\}$. 

\begin{itemize}
\itemsep=0pt
\item[{\rm (a)}]
We have $f_\fF(\Delta, x) = \eE_\fF (f(\Delta, x))$ 
for every $(n-1)$-dimensional simplicial complex 
$\Delta$. In particular, $\eE_\fF ((x+1)^n) = 
f_\fF(\sigma_n, x)$.

\item[{\rm (b)}]
Suppose that $\fF$ is feasible. Then, $\eE_\fF$ is 
invertible and commutes with the restriction on 
$\RR_d[x]$ of the algebra automorphism $\iI: \RR[x] 
\to \RR[x]$ defined by $\iI(x) = -1-x$.
\end{itemize}
\end{proposition}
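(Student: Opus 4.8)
The plan is to reduce everything to bookkeeping with the $f$-polynomials, using the face-counting identity already recorded in Section~\ref{sec:transform}. For part~(a), recall from Equation~(\ref{eq:fbasic}) that if $\Delta'$ is an $\fF$-uniform triangulation of an $(n-1)$-dimensional complex $\Delta$, then $f_{j-1}(\Delta') = \sum_{m=j}^n f_{m-1}(\Delta)\, f^\circ_\fF(j,m)$, because the faces of $\Delta'$ of dimension $j-1$ are partitioned according to their carrier. Multiplying by $x^j$, summing over $j$ and interchanging the order of summation gives $f(\Delta',x) = \sum_{m=0}^n f_{m-1}(\Delta)\, f^\circ_\fF(\sigma_m, x) = \sum_{m=0}^n f_{m-1}(\Delta)\, \eE_\fF(x^m)$, which by linearity of $\eE_\fF$ equals $\eE_\fF(f(\Delta,x))$; this is precisely the computation carried out inside the proof of Theorem~\ref{thm:mainAB}, so the argument may simply cite it. For a non-feasible $\fF$ the same identity holds as a formal consequence of that proof: its chain of equalities shows $f_\fF(\Delta,x) = \sum_m f_{m-1}(\Delta)\, f^\circ_\fF(\sigma_m,x)$ once one applies the transformation~(\ref{eq:fcirchcirc(sigma)}) between interior $f$- and interior $h$-polynomials. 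Finally, specializing to $\Delta = \sigma_n$, where $f(\sigma_n,x) = (1+x)^n$, yields the ``in particular'' clause $\eE_\fF((1+x)^n) = f_\fF(\sigma_n,x)$.

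For the invertibility in part~(b), I would observe that, by Definition~\ref{def:sdop}, $\eE_\fF(x^n) = f^\circ_\fF(\sigma_n,x)$ has degree at most $n$ and its coefficient of $x^n$ is $f^\circ_\fF(n,n)$, the number of (top-dimensional, hence interior) facets of any $\fF$-uniform triangulation of $\sigma_n$. When $\fF$ is feasible such a triangulation exists for every $n \le d$ and has at least one facet, so $f^\circ_\fF(n,n) \ge 1$. Consequently, in the basis $1, x,\dots,x^d$ of $\RR_d[x]$ the operator $\eE_\fF$ is represented by an upper triangular matrix with nonzero diagonal entries, and is therefore invertible.

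For the commutation with $\iI$, note first that $\iI(x) = -1-x$ has degree one, so $\iI$ restricts to a linear endomorphism of $\RR_d[x]$; hence both $\eE_\fF \circ \iI$ and $\iI \circ \eE_\fF$ are $\RR$-linear on $\RR_d[x]$, and it suffices to check that they agree on each basis vector $x^n$. On one side, $\eE_\fF(\iI(x^n)) = \eE_\fF((-1-x)^n) = (-1)^n \eE_\fF((1+x)^n) = (-1)^n f_\fF(\sigma_n,x)$ by part~(a). On the other side, $\iI(\eE_\fF(x^n)) = \iI(f^\circ_\fF(\sigma_n,x)) = f^\circ_\fF(\sigma_n,-1-x)$, which equals $(-1)^n f_\fF(\sigma_n,x)$ by Equation~(\ref{eq:ffcirc}). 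The two expressions coincide, which finishes the proof. The computations are all routine; the only points needing a little care are the appeal to feasibility to guarantee $f^\circ_\fF(n,n) \neq 0$ for invertibility, and the recognition that Equation~(\ref{eq:ffcirc})---itself derived from Proposition~\ref{prop:hsymmetry}---is exactly the symmetry that makes $\eE_\fF$ intertwine $\iI$. I do not expect a genuine obstacle.
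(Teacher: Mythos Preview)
Your proof is correct and follows essentially the same route as the paper's: for part~(a) you invoke the face-counting identity~(\ref{eq:fbasic}) and swap the order of summation, then specialize to $\Delta=\sigma_n$; for part~(b) you verify commutation on the monomial basis via the ``in particular'' clause of~(a) together with Equation~(\ref{eq:ffcirc}), and deduce invertibility from the triangular form with nonzero diagonal $f^\circ_\fF(n,n)$. The only cosmetic difference is that the paper packages the non-feasible case through Remark~\ref{rem:ffF} rather than spelling out the formal argument.
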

\begin{proof}
The coefficient of $x^j$ in $f_\fF 
(\Delta, x)$ is equal to the right-hand side of 
Equation~(\ref{eq:fbasic}); see Remark~\ref{rem:ffF}. 
As a result, 

\begin{eqnarray*}
f_\fF (\Delta, x) & = & \sum_{j=0}^n 
\left( \, \sum_{m=j}^n f_{m-1}(\Delta) \cdot 
f^\circ_\fF(j,m) \right) x^j \ = \ 
\sum_{m=0}^n f_{m-1}(\Delta) \, \sum_{j=0}^m
f^\circ_\fF(j,m) x^j \\ & = & 
\sum_{m=0}^n f_{m-1}(\Delta) \, \eE_\fF(x^m) \ = \ 
\eE_\fF \left( \, \sum_{m=0}^n f_{m-1}(\Delta) x^m 
\right) \ = \ \eE_\fF (f(\Delta, x)).
\end{eqnarray*}

\medskip
\noindent
This verifies the first assertion of part (a). The 
second is the special case $\Delta = \sigma_n$.

For (b), we have to show that $(\eE_\fF \circ \iI)
(x^n) = (\iI \circ \eE_\fF)(x^n)$ for every $n \in 
\{0, 1,\dots,d\}$. By the second assertion of 
Proposition~\ref{prop:sdop} (a) we have $(\eE_\fF \circ 
\iI)(x^n) = \eE_\fF ((-1-x)^n) = (-1)^n \eE_\fF ((x+1)^n)
= (-1)^n f_\fF(\sigma_n, x)$. Since  
$(\iI \circ \eE_\fF)(x^n) = f_\fF^\circ (-1-x)$, the 
desired equality is equivalent to 
Equation~(\ref{eq:ffcirc}) and the proof follows.
Clearly, $\eE_\fF$ is invertible since it has 
triangular form in the standard basis of $\RR_d[x]$
with nonzero diagonal entries $f_\fF^\circ(n,n)$.
\end{proof}

\begin{remark} \label{rem:symmetry} \rm
Just as in the proof of \cite[Lemma~7.3.11]{Bra15} 
for barycentric subdivision, one can use 
Proposition~\ref{prop:sdop} to obtain a new proof 
of part (a) of Corollary~\ref{cor:mainA}. Indeed, 
assume that $h(\Delta,x)$ is symmetric, with center 
of symmetry $n/2$. Then, $(-1)^n f(\Delta,-1-x) = 
f(\Delta,x)$ and applying the operator $\eE_\fF$, 
we get

\begin{eqnarray*}
(-1)^n f_\fF(\Delta,-1-x) & = & (-1)^n (\iI \circ
\eE_\fF) f(\Delta,x) \ = \ 
(-1)^n (\eE_\fF \circ \iI) f(\Delta,x) \\ & = & 
(-1)^n \eE_\fF (f(\Delta,-1-x)) \ = \ 
\eE_\fF ((-1)^n f(\Delta,-1-x)) \\ & = & 
\eE_\fF (f(\Delta,x)) \ = \ f_\fF(\Delta,x).
\end{eqnarray*}

\medskip
\noindent
This means that $h_\fF(\Delta, x)$ is symmetric, with 
center of symmetry $n/2$. 
\qed
\end{remark}

We have shown that $\eE_\fF$ is the linear operator 
which describes the transformation of the 
$f$-polynomial of a simplicial complex under 
$\fF$-uniform triangulation. We now consider the 
corresponding transformation of the $h$-polynomial.
\begin{definition} \label{def:hsdop} 
Given an $f$-triangle $\fF$ of size $d$ and $n \in 
\{0, 1,\dots,d\}$, we define the linear operator 
$\dD_{\fF,n}: \RR_n[x] \to \RR_n[x]$ by setting 
$\dD_{\fF,n}(x^k) := \ p_{\fF,n,k}(x)$ for every 
$k \in \{0, 1,\dots,n\}$. Equivalently, 
\[ \dD_{\fF,n}(h(x)) \ := \ \sum_{k=0}^n h_k 
   p_{\fF,n,k}(x) \]
for every $h(x) = \sum_{k=0}^n h_k x^k \in \RR_n
[x]$. 
\end{definition}

The following proposition lists basic properties 
of $\dD_{\fF,n}$ and interprets the polynomial 
$p_{\fF,n,k}(x)$ by means of the subdivision 
operator $\eE_\fF$.
\begin{proposition} \label{prop:hsdop} 
Let $\fF$ be an $f$-triangle of size $d$ and $n 
\in \{0, 1,\dots,d\}$. 

\begin{itemize}
\itemsep=0pt
\item[{\rm (a)}]
$h_\fF(\Delta, x) = \dD_{\fF,n} (h(\Delta, x))$ for 
every $(n-1)$-dimensional simplicial complex $\Delta$. 

\item[{\rm (b)}]
We have $\dD_{\fF,n} = \jJ^{-1}_n \circ \eE_{\fF,n} 
\circ \jJ_n$, where $\jJ_n: \RR_n[x] \to \RR_n[x]$ is 
the invertible linear operator which maps a 
polynomial $h(x) \in \RR_n[x]$ to the associated
$f$-polynomial and $\eE_{\fF,n}$ is the restriction 
of $\eE_\fF$ to $\RR_n[x]$. 

\item[{\rm (c)}]
The polynomial $p_{\fF,n,k}(x)$ is equal to the 
$h$-polynomial associated to $f_{\fF,n,k}(x) := 
\eE_\fF (x^k(1+x)^{n-k})$, so that 
\[ p_{\fF,n,k}(x) \ = \ (1-x)^n f_{\fF,n,k}
   \left( \frac{x}{1-x} \right), \]
for every $k \in \{0, 1,\dots,n\}$.
\end{itemize}
\end{proposition}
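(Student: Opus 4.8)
The plan is to prove the three parts in the order (a), (b), (c), with (a) being essentially a restatement of what was already established and the real content living in (b); part (c) will then follow quickly by unwinding definitions. For part (a), I would simply observe that Equation~(\ref{eq:hFdef}) in Theorem~\ref{thm:mainAB} says precisely that $h_\fF(\Delta,x) = \sum_{k=0}^n h_k(\Delta) p_{\fF,n,k}(x)$, and by Definition~\ref{def:hsdop} the right-hand side is exactly $\dD_{\fF,n}(h(\Delta,x))$, since $h(\Delta,x) = \sum_k h_k(\Delta) x^k$ has degree at most $n$. There is nothing to compute here; it is a translation of Theorem~\ref{thm:mainAB} into operator language.

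For part (b), the strategy is to verify the operator identity $\dD_{\fF,n} = \jJ_n^{-1} \circ \eE_{\fF,n} \circ \jJ_n$ by checking it on the basis $\{1, x, \dots, x^n\}$ of $\RR_n[x]$; since all three operators are linear, agreement on a basis suffices. Fix $k \in \{0,1,\dots,n\}$. By definition $\jJ_n$ sends a polynomial $h(x)\in\RR_n[x]$ to its associated $f$-polynomial $(1+x)^n h(x/(1+x))$, so $\jJ_n(x^k) = (1+x)^n (x/(1+x))^k = x^k(1+x)^{n-k}$. Applying $\eE_{\fF,n}$, I would want to identify $\eE_\fF(x^k(1+x)^{n-k})$; this is where Proposition~\ref{prop:sdop}(a) enters, or rather a computation parallel to the proof of Theorem~\ref{thm:mainAB}: expanding $(1+x)^{n-k} = \sum_{m} \binom{n-k}{m-k} x^{m-k}$ gives $x^k(1+x)^{n-k} = \sum_{m=k}^n \binom{n-k}{m-k} x^m$, whence $\eE_\fF(x^k(1+x)^{n-k}) = \sum_{m=k}^n \binom{n-k}{m-k} f^\circ_\fF(\sigma_m,x)$. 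Then $\jJ_n^{-1}$ converts this $f$-polynomial back to the associated $h$-polynomial by the rule $f(x)\mapsto (1-x)^n f(x/(1-x))$, producing $(1-x)^n \sum_{m=k}^n \binom{n-k}{m-k}(x/(1-x))^? \cdots$ — more precisely $\sum_{m=k}^n \binom{n-k}{m-k} (1-x)^{n-m} x^m h^\circ_\fF(\sigma_m, 1/x) \cdot$ (after using $(1-x)^n f^\circ_\fF(\sigma_m, x/(1-x)) = (1-x)^{n-m} x^m \cdot (x/(1-x))^{-m}(1-x)^m f^\circ_\fF(\sigma_m, x/(1-x))$, i.e. tracking the degree-$m$ homogenization carefully). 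The point is that the resulting expression is exactly the intermediate formula $p_{\fF,n,k}(x) = \sum_{m=k}^n \binom{n-k}{m-k} x^m(1-x)^{n-m} h_\fF(\sigma_m, 1/x)$ derived in the proof of Theorem~\ref{thm:mainAB}, using $h^\circ_\fF(\sigma_m,x) = x^m h_\fF(\sigma_m, 1/x)$ from Equation~(\ref{eq:hhcirc(sigma)}). Hence $(\jJ_n^{-1}\circ\eE_{\fF,n}\circ\jJ_n)(x^k) = p_{\fF,n,k}(x) = \dD_{\fF,n}(x^k)$, as desired. The invertibility of $\jJ_n$ is clear since $h(x)\mapsto (1+x)^n h(x/(1+x))$ and $f(x)\mapsto(1-x)^n f(x/(1-x))$ are mutually inverse on $\RR_n[x]$.

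For part (c), I would note that $\jJ_n(x^k) = x^k(1+x)^{n-k}$ was just computed, so $f_{\fF,n,k}(x) := \eE_\fF(x^k(1+x)^{n-k}) = (\eE_{\fF,n}\circ\jJ_n)(x^k)$, and therefore $p_{\fF,n,k}(x) = \dD_{\fF,n}(x^k) = \jJ_n^{-1}(f_{\fF,n,k}(x))$ by part (b); by definition of $\jJ_n^{-1}$ this is the $h$-polynomial associated to $f_{\fF,n,k}(x)$, namely $(1-x)^n f_{\fF,n,k}(x/(1-x))$. The main obstacle, such as it is, will be bookkeeping in part (b): one must be careful that $\jJ_n$ and $\jJ_n^{-1}$ use the degree-$n$ convention uniformly (not the "intrinsic" degree of each polynomial), and that the homogenization factors $(1-x)^{n-m}$ versus $(1-x)^{n-k}$ are tracked correctly through the binomial expansion. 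Once one is disciplined about the fixed exponent $n$ throughout, the identity falls out by matching with the formula already proved inside Theorem~\ref{thm:mainAB}, so no genuinely new computation is required.
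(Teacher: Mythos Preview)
Your proposal is correct and follows essentially the same approach as the paper: part (a) is a restatement of Theorem~\ref{thm:mainAB}, part (b) is verified by recognizing that the computation in the proof of Theorem~\ref{thm:mainAB} already tracks the passage $h \mapsto f \mapsto \eE_\fF(f) \mapsto h$-polynomial, and part (c) is the specialization of (b) to $h(x)=x^k$. The paper states (b) in one line by appealing directly to that computation for an arbitrary $h(x)$, whereas you spell out the basis check and the bookkeeping, but the content is the same.
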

\begin{proof}
Part (a) follows directly from 
Theorem~\ref{thm:mainAB}. Moreover, the proof of 
this theorem shows that given any polynomial $h(x) \in 
\RR_n[x]$ with associated $f$-polynomial $f(x)$,
the $h$-polynomial which corresponds to $\eE_\fF 
(f(x))$ equals $\dD_{\fF,n}(h(x))$. Thus, part (b) 
holds as well. Part (c) follows by applying (b) to
$h(x) = x^k$, whose associated $f$-polynomial 
is $f(x) = x^k(1+x)^{n-k}$.
\end{proof}

\begin{corollary} \label{cor:p-char} 
For any $f$-triangle $\fF$ of size $d$ we have 
\begin{equation}
\label{eq:ppoly-rec}
p_{\fF,n,k}(x) \ = \ p_{\fF,n,k-1}(x) + (x-1) 
p_{\fF,n-1,k-1}(x)
\end{equation}
for $1 \le k \le n \le d$. Equivalently, the 
recurrence~(\ref{eq:recurrence}) holds for the 
coefficients $p_\fF(n,k,j)$.
\end{corollary}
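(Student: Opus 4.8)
The plan is to prove the polynomial identity~(\ref{eq:ppoly-rec}) by exploiting the operator factorization $\dD_{\fF,n} = \jJ_n^{-1} \circ \eE_{\fF,n} \circ \jJ_n$ from Proposition~\ref{prop:hsdop}(b), together with the description of $p_{\fF,n,k}(x)$ in part (c) of the same proposition as the $h$-polynomial associated to $f_{\fF,n,k}(x) = \eE_\fF(x^k(1+x)^{n-k})$. Since passing from $h$-polynomials to associated $f$-polynomials (with respect to the fixed $n$) is the \emph{same} invertible linear map $\jJ_n$ on both sides of~(\ref{eq:ppoly-rec})---note that $p_{\fF,n-1,k-1}(x)$ is an $h$-polynomial with respect to $n-1$, so I must be careful to re-express it with respect to $n$---it suffices to verify the corresponding identity at the level of $f$-polynomials. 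Concretely, the $f$-polynomial associated (with respect to $n$) to $(x-1)p_{\fF,n-1,k-1}(x)$ needs to be computed; the cleanest route is to check that the $f$-polynomial of a sum of $h$-polynomials of possibly different "reference dimensions" behaves well under multiplication by $(x-1)$, using that the operator sending $h(x)$ to $h(x/(1+x))(1+x)^n$ turns multiplication by $(1-x)$ of the $h$-side into a simple shift on the $f$-side.

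First I would rewrite the target identity purely in terms of the subdivision operator $\eE_\fF$. Applying $\jJ_n$ to both sides of~(\ref{eq:ppoly-rec}) and using part (c), the left side becomes $\eE_\fF(x^k(1+x)^{n-k})$ and the first term on the right becomes $\eE_\fF(x^{k-1}(1+x)^{n-k+1})$. The subtle term is $\jJ_n\big((x-1)p_{\fF,n-1,k-1}(x)\big)$. Here I would use the elementary fact that if $g(x)$ is the $f$-polynomial associated to $p(x)$ with respect to $m$, then the $f$-polynomial associated to $(x-1)p(x)$ with respect to $m+1$ is $-g(x)$; equivalently, the $f$-polynomial associated (w.r.t.\ $n$) to $(x-1)p_{\fF,n-1,k-1}(x)$ equals $-\eE_\fF(x^{k-1}(1+x)^{n-1-(k-1)}) = -\eE_\fF(x^{k-1}(1+x)^{n-k})$. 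Granting this, identity~(\ref{eq:ppoly-rec}) reduces to
\[
\eE_\fF\big(x^k(1+x)^{n-k}\big) \ = \ \eE_\fF\big(x^{k-1}(1+x)^{n-k+1}\big) - \eE_\fF\big(x^{k-1}(1+x)^{n-k}\big),
\]
which by linearity of $\eE_\fF$ is just the polynomial identity $x^k(1+x)^{n-k} = x^{k-1}(1+x)^{n-k+1} - x^{k-1}(1+x)^{n-k}$, i.e.\ $x^k = x^{k-1}(1+x) - x^{k-1}$. That last line is trivially true.

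The one genuine point requiring care---and the step I expect to be the main obstacle---is the claim about how the $h$-to-$f$ correspondence interacts with multiplication by $(x-1)$ across a change in reference dimension. I would establish it by unwinding the definitions: the $f$-polynomial associated to $p(x)$ w.r.t.\ $m$ is $(1+x)^m p\!\left(\tfrac{x}{1+x}\right)$, so the $f$-polynomial associated to $(x-1)p(x)$ w.r.t.\ $m+1$ is $(1+x)^{m+1}\big(\tfrac{x}{1+x}-1\big) p\!\left(\tfrac{x}{1+x}\right) = (1+x)^{m+1}\cdot\tfrac{-1}{1+x}\cdot p\!\left(\tfrac{x}{1+x}\right) = -(1+x)^m p\!\left(\tfrac{x}{1+x}\right)$, which is exactly $-1$ times the $f$-polynomial associated to $p(x)$ w.r.t.\ $m$. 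Applying this with $m = n-1$ and $p(x) = p_{\fF,n-1,k-1}(x)$, whose associated $f$-polynomial w.r.t.\ $n-1$ is $f_{\fF,n-1,k-1}(x) = \eE_\fF(x^{k-1}(1+x)^{n-k})$ by Proposition~\ref{prop:hsdop}(c), gives the needed formula. Finally, the parenthetical equivalence with recurrence~(\ref{eq:recurrence}) follows by extracting the coefficient of $x^j$ from~(\ref{eq:ppoly-rec}), since the coefficient of $x^j$ in $(x-1)p_{\fF,n-1,k-1}(x)$ is $p_\fF(n-1,k-1,j-1) - p_\fF(n-1,k-1,j)$; this simultaneously completes the postponed proof of Proposition~\ref{prop:coef}(b).
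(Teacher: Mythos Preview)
Your proof is correct and is essentially the same as the paper's: both reduce the recurrence to the trivial polynomial identity $x^{k-1}(1+x)^{n-k+1} - x^{k-1}(1+x)^{n-k} = x^k(1+x)^{n-k}$ via linearity of $\eE_\fF$ and Proposition~\ref{prop:hsdop}(c). The only cosmetic difference is the direction of the translation---you apply $\jJ_n$ to the $h$-polynomial identity and verify the resulting $f$-polynomial identity, whereas the paper starts from the $f$-polynomial identity $f_{\fF,n,k}(x) = f_{\fF,n,k+1}(x) + f_{\fF,n-1,k}(x)$ and then applies $\jJ_n^{-1}$, using the observation $(1-x)^n f_{\fF,n-1,k}(x/(1-x)) = (1-x)\,p_{\fF,n-1,k}(x)$, which is exactly your ``$(x-1)$ across a change in reference dimension'' lemma read backwards.
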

\begin{proof}
Under the notation of 
Proposition~\ref{prop:hsdop} (c),

\begin{eqnarray}
f_{\fF,n,k}(x) & = & \eE_\fF (x^k(1+x)^{n-k}) \ = \ 
\eE_\fF (x^{k+1}(1+x)^{n-k-1} + x^k(1+x)^{n-k-1}) 
\nonumber \\
& = & \eE_\fF (x^{k+1}(1+x)^{n-k-1}) + 
\eE_\fF (x^k(1+x)^{n-k-1}) \nonumber \\ & = & 
f_{\fF,n,k+1}(x) + f_{\fF,n-1,k}(x). 
\label{eq:fpoly-rec}
\end{eqnarray}

\medskip
\noindent
Thus, by the same proposition,

\begin{eqnarray*}
p_{\fF,n,k+1}(x) & = & (1-x)^n f_{\fF,n,k+1}(x/(1-x)) 
\\ & = & (1-x)^n f_{\fF,n,k}(x/(1-x)) - (1-x)^n 
f_{\fF,n-1,k}(x/(1-x)) \\ & = & 
p_{\fF,n,k}(x) - (1-x) p_{\fF,n-1,k}(x)
\end{eqnarray*}

\medskip
\noindent
and the proof follows.
\end{proof}

\begin{example} \label{ex:hsdop-esd} \rm
By Proposition~\ref{prop:hsdop} and 
Equation~(\ref{eq:BW09}), for the $r$-fold edgewise 
subdivision the operator $\dD_{\fF,n}$ has the explicit
form 
\[ \dD_{\fF,n}(h(x)) \ = \ \left(
   (1 + x + x^2 + \cdots + x^{r-1})^n h(x) 
	 \right)^{\langle r,0 \rangle}. \]
For the relation of this operator to the Veronese 
construction for rational formal power series, see 
\cite[Section~3]{BS10}.
\qed
\end{example}

The second interpretation of the polynomials 
$p_{\fF,n,k}(x)$ offered in this section is as 
follows. 
\begin{proposition} \label{prop:relative} 
Let $\fF$ be an $f$-triangle of size $d$ and 
$\Gamma_n$ be an $\fF$-uniform triangulation of the 
simplex $\sigma_n$ for some $n \le d$. Then, $p_{\fF,n,k}
(x) = h(\Gamma_{n,k}, x)$ for every $k \in \{0, 1,\dots,n\}$,
where $\Gamma_{n,k}$ is the relative simplicial complex 
obtained from $\Gamma_n$ by removing all faces carried by 
any $k$ chosen facets of $\sigma_n$. 
\end{proposition}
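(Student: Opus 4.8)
The plan is to identify $p_{\fF,n,k}(x)$ with the $h$-polynomial of a relative simplicial complex $(\Gamma_n, \Lambda_k)$, where $\Lambda_k$ is the subcomplex of $\Gamma_n$ consisting of all faces carried by a chosen union of $k$ facets of $\sigma_n$ (equivalently, the restriction of $\Gamma_n$ to the subcomplex of $\sigma_n$ spanned by those $k$ facets). First I would write $\sigma_n$ with vertex set $\{v_1,\dots,v_n\}$ and choose the $k$ facets $F_1,\dots,F_k$ to be the ones obtained by deleting $v_1,\dots,v_k$ respectively; call $\Lambda_k$ the restriction of $\Gamma_n$ to $F_1\cup\cdots\cup F_k$. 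The goal is then to show $h(\Gamma_n/\Lambda_k, x)=p_{\fF,n,k}(x)$, i.e.\ that the $f$-polynomial $f(\Gamma_n/\Lambda_k,x)=\sum_i f_{i-1}(\Gamma_n/\Lambda_k)x^i$ transforms under $(1-x)^nf(x/(1-x))$ into $p_{\fF,n,k}(x)$.

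The key computational step is to count faces of $\Gamma_n$ not lying in $\Lambda_k$, organized by carrier. A face $G\in\Gamma_n$ has carrier some face $E\subseteq\{v_1,\dots,v_n\}$ of $\sigma_n$; $G$ fails to lie in $\Lambda_k$ precisely when $E$ is not contained in any $F_j$, i.e.\ when $\{v_1,\dots,v_k\}\subseteq E$. So the faces of dimension $i-1$ of $\Gamma_n/\Lambda_k$ are exactly the faces of $\Gamma_n$ of dimension $i-1$ whose carrier contains $\{v_1,\dots,v_k\}$. Grouping by the dimension $m-1$ of the carrier $E$ (so $k\le m\le n$), the number of faces $E$ of $\sigma_n$ of that dimension containing the fixed $k$-set is $\binom{n-k}{m-k}$, and each contributes $f^\circ_\fF(i,m)$ interior faces of dimension $i-1$. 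Hence
\begin{equation*}
f_{i-1}(\Gamma_n/\Lambda_k)\ =\ \sum_{m=k}^n \binom{n-k}{m-k} f^\circ_\fF(i,m),
\end{equation*}
so that $f(\Gamma_n/\Lambda_k,x)=\sum_{m=k}^n\binom{n-k}{m-k}f^\circ_\fF(\sigma_m,x)$. This is exactly $\eE_\fF$ applied to $\sum_{m=k}^n\binom{n-k}{m-k}x^m=x^k(1+x)^{n-k}$, using Definition~\ref{def:sdop}, so $f(\Gamma_n/\Lambda_k,x)=\eE_\fF(x^k(1+x)^{n-k})=f_{\fF,n,k}(x)$ in the notation of Proposition~\ref{prop:hsdop}(c).

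Applying the transformation $h(x)=(1-x)^nf(x/(1-x))$ and invoking Proposition~\ref{prop:hsdop}(c) directly gives $h(\Gamma_n/\Lambda_k,x)=(1-x)^nf_{\fF,n,k}(x/(1-x))=p_{\fF,n,k}(x)$, completing the proof. I expect the main obstacle to be the bookkeeping in the carrier argument: one must be careful that ``removing all faces carried by any $k$ chosen facets'' is correctly interpreted as removing the subcomplex $\Lambda_k$ that is the restriction of $\Gamma_n$ to the union of those facets (not merely the faces \emph{strictly interior} to individual facets), and that the independence of the result from the choice of the $k$ facets is automatic since only the combinatorial type of $(\sigma_n,F_1\cup\cdots\cup F_k)$, namely $(\sigma_n,\partial$-type determined by $n$ and $k)$, enters — all such choices are related by a relabeling of the vertices of $\sigma_n$, under which $\fF$-uniformity is preserved. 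A secondary point worth a sentence is that $f^\circ_\fF(i,m)$ genuinely counts faces of $\Gamma_n$ of dimension $i-1$ with carrier \emph{exactly} an $(m-1)$-face, which is precisely the content recalled at the start of the proof of Theorem~\ref{thm:mainAB} ($\fF$-uniformity of $\Gamma_n$ as a triangulation of $\sigma_n$, viewing $\sigma_n$ itself as the ambient complex).
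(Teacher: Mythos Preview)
Your proof is correct but proceeds differently from the paper's. The paper argues by induction on $k$: it checks the base case $h(\Gamma_{n,0},x)=h_\fF(\sigma_n,x)=p_{\fF,n,0}(x)$ and then observes that removing one additional facet from $\Gamma_{n,k}$ deletes a relative subcomplex isomorphic to $\Gamma_{n-1,k}$, so the $f$-polynomials $f(\Gamma_{n,k},x)$ satisfy the same recurrence $f_{\fF,n,k}(x)=f_{\fF,n,k+1}(x)+f_{\fF,n-1,k}(x)$ as in Corollary~\ref{cor:p-char}. You instead compute $f(\Gamma_n/\Lambda_k,x)$ in one shot by grouping faces according to their carrier, recognize the answer as $\eE_\fF(x^k(1+x)^{n-k})$, and quote Proposition~\ref{prop:hsdop}(c). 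In effect, you are running the calculation from the proof of Theorem~\ref{thm:mainAB} on the relative pair $(\sigma_n,F_1\cup\cdots\cup F_k)$, whose relative $f$-polynomial is $x^k(1+x)^{n-k}$; this is slightly more direct and makes the independence from the choice of facets transparent, while the paper's inductive argument highlights the recursive geometric structure (and thereby gives a second, more conceptual reason the recurrence~(\ref{eq:ppoly-rec}) should hold).
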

\begin{proof}
Since $\Gamma_{n,0} = \Gamma_n$, we have 
$h(\Gamma_{n,0}, x) = h(\Gamma_n, x) = h_\fF(\sigma_n, x) 
= p_{\fF,n,0} (x)$ for all $n \le d$. Thus, it suffices to 
show that the polynomials $h(\Gamma_{n,k}, x)$ satisfy 
recurrence (\ref{eq:ppoly-rec}) or, equivalently, that the 
polynomials $f(\Gamma_{n,k}, x)$ satisfy 
recurrence (\ref{eq:fpoly-rec}). Indeed, this is true 
because, by construction, $\Gamma_{n,k+1}$ is the complex 
obtained from $\Gamma_{n,k}$ by removing a relative 
subcomplex combinatorially isomorphic to $\Gamma_{n-1,k}$.
\end{proof}

For example, let $d = 3$ and suppose again that $\fF$ is 
the $f$-triangle for the edgewise subdivision of Figure~1.
By counting directly the faces of the relative simplicial 
complexes $\Gamma_{3,k}$, we find that
\[ f(\Gamma_{3,k}, x) \ = \ \begin{cases}
    1 + 15x + 30x^2 + 16x^3, & \text{if $k=0$} \\
    10x + 26x^2 + 16x^3, & \text{if $k=1$} \\
    6x + 22x^2 + 16x^3, & \text{if $k=2$} \\
    3x + 18x^2 + 16x^3, & \text{if $k=3$}. \end{cases} \]
We leave it to the reader to verify that the 
corresponding $h$-polynomials are exactly those we 
computed as the polynomials $p_{\fF,3,k}(x)$ in 
Example~\ref{ex:d=3}. 

\begin{remark} \label{rem:nonnegativity} \rm
Proposition~\ref{prop:relative} implies the 
nonnegativity of the polynomials $p_{\fF,n,k}(x)$, via 
relative Stanley--Reisner theory, as well as the 
symmetry property $x^n p_{\fF,n,k} (1/x) = 
p_{\fF,n,n-k} (x)$. Indeed, the former follows since 
$\Gamma_{n,k}$ is relatively Cohen--Macaulay, by 
\cite[Corollary~III.7.3~(iii)]{StaCCA}, and hence has 
nonnegative $h$-vector. The latter follows from 
Proposition~\ref{prop:hsymmetry}. 
\end{remark}

\section{Real-rootedness of the $h$-polynomial}
\label{sec:roots}

This section proves an expanded version of 
Theorem~\ref{thm:mainB}.

We recall that a polynomial $g(x) \in \RR[x]$ is
\emph{real-rooted} if all complex roots of $g(x)$ are 
real, or $g(x)$ is the zero polynomial. A real-rooted
polynomial, with roots $\alpha_1 \ge \alpha_2 
\ge \cdots$, is said to \emph{interlace} a 
real-rooted polynomial, with roots $\beta_1 
\ge \beta_2 \ge \cdots$, if
\[ \cdots \le \alpha_2 \le \beta_2 \le \alpha_1 \le
   \beta_1. \]
By convention, the zero polynomial interlaces and is 
interlaced by every real-rooted polynomial and constant 
polynomials interlace all polynomials of degree at most 
one. We refer to \cite[Section~7.8]{Bra15} and the 
references given there for background on the theory of 
interlacing and for any related undefined terminology. 

Every polynomial $g(x) \in \RR_n[x]$ 
can be written uniquely in the form $g(x) = a(x) + 
xb(x)$, where $a(x) \in \RR_n[x]$ is symmetric, with
center of symmetry $n/2$ and $b(x) \in \RR_{n-1}[x]$ 
is symmetric, with center of symmetry $(n-1)/2$.
Following \cite{BS20}, we will say that $g(x)$ has a
\emph{nonnegative, real-rooted symmetric decomposition}
with respect to $n$, if $a(x)$ and $b(x)$ are 
real-rooted polynomials with nonnegative coefficients.
We will also say that such a decomposition is 
\emph{interlacing} if the following equivalent 
(by \cite[Theorem~2.6]{BS20}) conditions hold: (i) 
$a(x)$ interlaces $g(x)$; (ii) $b(x)$ interlaces 
$g(x)$; (iii) $b(x)$ interlaces $a(x)$; and (iv) 
$x^n g(1/x)$ interlaces $g(x)$.

We can now state the main result of this section.
\begin{theorem} \label{thm:mainBB} 
Let $\fF$ be a feasible $f$-triangle of size $d \in 
\NN \cup \{ \infty\}$ and assume the following for some 
$n \in \{1, 2,\dots,d\}$: 

\begin{itemize}
\itemsep=0pt
\item[{\rm (i)}]
$h_\fF(\sigma_m, x)$ is a real-rooted polynomial for 
all $2 \le m < n$.

\item[{\rm (ii)}]
$h_\fF(\sigma_m, x) - h_\fF(\partial \sigma_m, x)$ is 
either identically zero, or a real-rooted polynomial of
degree $m-1$ with nonnegative coefficients which is 
interlaced by $h_\fF(\sigma_{m-1}, x)$, for all 
$2 \le m \le n$.
\end{itemize}
Then:

\begin{itemize}
\itemsep=0pt
\item[{\rm (a)}]
The polynomial $\dD_{\fF,n}(h(x))$ is real-rooted for 
every polynomial $h(x) \in \RR_n[x]$ with nonnegative 
coefficients. In 
particular, $h_\fF(\Delta, x)$ is real-rooted for every
$(n-1)$-dimensional simplicial complex $\Delta$ with 
nonnegative $h$-vector.

\item[{\rm (b)}]
The polynomials $h_\fF(\sigma_n, x)$ and 
$h_\fF(\partial\sigma_n, x)$ are real-rooted and are
interlaced by $h_\fF(\sigma_{n-1}, x)$. Moreover,
$h_\fF(\sigma_n, x)$ has a nonnegative, real-rooted 
and interlacing symmetric decomposition with respect 
to $n-1$.
\end{itemize}
\end{theorem}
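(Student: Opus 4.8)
The plan is to prove the three parts simultaneously by induction on $n$, using the theory of interlacing polynomials as the principal engine. The base case (small $n$) is routine from Example~\ref{ex:d=2}-type computations, so the real content is the inductive step. I would first record the key structural identity: by Proposition~\ref{prop:relative}, $p_{\fF,n,k}(x) = h(\Gamma_{n,k},x)$, and removing one more facet's worth of carried faces passes from $\Gamma_{n,k}$ to $\Gamma_{n,k+1}$; combined with Corollary~\ref{cor:p-char} this gives the recurrence $p_{\fF,n,k}(x) = p_{\fF,n,k-1}(x) + (x-1)p_{\fF,n-1,k-1}(x)$. The idea is to show, as part of the induction, that the family $\{p_{\fF,n,k}(x) : 0 \le k \le n\}$ is \emph{interlacing} (pairwise compatible in the sense of \cite[Section~8]{Bra15}), which by the standard lemma on nonnegative linear combinations of interlacing families immediately yields part (a): $\dD_{\fF,n}(h(x)) = \sum_k h_k p_{\fF,n,k}(x)$ is real-rooted whenever all $h_k \ge 0$.

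Next I would handle part (b) for level $n$. Taking $\Delta = \sigma_n$ gives $h_\fF(\sigma_n,x) = p_{\fF,n,0}(x)$, and taking $\Delta = \partial\sigma_{n+1}$ together with Proposition~\ref{prop:coef}(e) expresses $h_\fF(\partial\sigma_{n+1},x)$ as the full sum $\sum_k p_{\fF,n,k}(x)$; but for the induction I actually need $h_\fF(\sigma_n,x)$ and $h_\fF(\partial\sigma_n,x)$, which live one dimension down and are available by the inductive hypothesis, so the genuine new input at stage $n$ is hypothesis~(ii) for $m=n$ together with the real-rootedness of $h_\fF(\sigma_{n-1},x)$ from hypothesis~(i) or from part (b) at stage $n-1$. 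The mechanism I expect to use is: the relation $h_\fF(\partial\sigma_n,x) = h_\fF(\sigma_n,x) - \bigl(h_\fF(\sigma_n,x)-h_\fF(\partial\sigma_n,x)\bigr)$ exhibits $h_\fF(\partial\sigma_n,x)$ as a difference, and hypothesis~(ii) says the subtracted term is real-rooted, nonnegative, of degree $n-1$, and interlaced by $h_\fF(\sigma_{n-1},x)$; a Wronskian/interlacing argument (of the type in \cite[Section~8]{Bra15}) then shows both $h_\fF(\sigma_n,x)$ and $h_\fF(\partial\sigma_n,x)$ are real-rooted and interlaced by $h_\fF(\sigma_{n-1},x)$. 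Propagating this through the recurrence $p_{\fF,n,k}(x) = p_{\fF,n,k-1}(x)+(x-1)p_{\fF,n-1,k-1}(x)$, using that $(x-1)\cdot(\text{real-rooted})$ and additivity preserve the interlacing relation in the right directions, should establish that the whole row $\{p_{\fF,n,k}\}_k$ is an interlacing family, closing the induction for (a) and (b).

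For part (c), the symmetric decomposition $h_\fF(\sigma_n,x) = a(x) + x\,b(x)$ with $a$ symmetric about $n/2$... wait, about $(n-1)/2$ — more precisely decomposing with respect to $n-1$ — I would identify $a(x)$ and $b(x)$ combinatorially. Here Proposition~\ref{prop:hsymmetry} (the $x^n h(\Delta,1/x) = h(\inte\Delta,x)$ part, applied to an $\fF$-uniform triangulation of $\sigma_n$) relates $h_\fF(\sigma_n,x)$ and $h^\circ_\fF(\sigma_n,x)$, and Proposition~\ref{prop:hsdop}(c) or the relative-complex picture of Proposition~\ref{prop:relative} should let me write $b(x)$ (resp. $a(x)$) in terms of the interior $h$-polynomial and of $h_\fF(\partial\sigma_n,x)$. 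Concretely, I expect $a(x)$ to be (a symmetrization of) $h_\fF(\partial\sigma_n,x)$ and $b(x)$ to measure the interior contribution, each of which is real-rooted with nonnegative coefficients by part (b) and by the nonnegativity coming from relative Cohen--Macaulayness (Remark~\ref{rem:nonnegativity}).

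The main obstacle will be bookkeeping the precise \emph{directions} of all the interlacing relations through the two-term recurrence $p_{\fF,n,k}(x) = p_{\fF,n,k-1}(x) + (x-1)p_{\fF,n-1,k-1}(x)$: multiplication by $(x-1)$ shifts roots and can flip which polynomial interlaces which, so one must set up the inductive hypothesis with exactly the right list of interlacings (both within row $n-1$ and between rows $n-1$ and $n$) so that it is self-reproducing. Once the correct ``interlacing ansatz'' for a full row is found, the verification that each step preserves it is standard interlacing calculus; guessing that ansatz correctly — and checking the degree hypothesis in~(ii) is exactly what prevents degeneration — is the crux. The remaining steps (base case, deducing (c) from (a)/(b) plus Proposition~\ref{prop:hsymmetry}, and the ``in particular'' statements about $h_\fF(\Delta,x)$) are then essentially formal.
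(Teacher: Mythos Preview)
Your overall architecture --- induction on $n$, showing that the row $(p_{\fF,n,0}(x),\dots,p_{\fF,n,n}(x))$ is an interlacing sequence, then deducing (a) as a nonnegative combination --- is exactly the paper's. The gap is the step you yourself flag as the crux and leave open: you propose to propagate interlacing through the two-term recurrence $p_{\fF,n,k} = p_{\fF,n,k-1} + (x-1)\,p_{\fF,n-1,k-1}$, but the $(x-1)$ factor genuinely obstructs the standard lemmas (the summand $(x-1)p_{\fF,n-1,k-1}$ has a positive root and coefficients of mixed sign), and you give no mechanism to overcome this. The paper's resolution is to \emph{sum} the short recurrence (Lemma~\ref{lem:recurrence}) to obtain
\[
p_{\fF,n,k}(x) \ = \ x\sum_{i=0}^{k-1} p_{\fF,n-1,i}(x) \ + \ \sum_{i=k}^{n} p_{\fF,n-1,i}(x),
\]
where crucially the upper index is $n$, not $n-1$, with the convention $p_{\fF,n-1,n}(x) := h_\fF(\sigma_n,x) - h_\fF(\partial\sigma_n,x)$. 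This is precisely the shape handled by \cite[Corollary~8.7]{Bra15}: if the \emph{augmented} sequence $\pP_{\fF,n} = (p_{\fF,n-1,0},\dots,p_{\fF,n-1,n})$ is interlacing, then so is $\qQ_{\fF,n} = (p_{\fF,n,0},\dots,p_{\fF,n,n})$. Hypothesis~(ii) enters exactly here, to check that appending the extra term $p_{\fF,n-1,n}$ to $\qQ_{\fF,n-1}$ preserves interlacing (together with the symmetry $p_{\fF,n-1,n-1}(x) = x^{n-1}p_{\fF,n-1,0}(1/x)$). You invoke~(ii) elsewhere and miss this augmentation; it is the ``interlacing ansatz'' you are looking for.

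For part~(c) your identification is morally right --- the decomposition is simply $h_\fF(\sigma_n,x) = h_\fF(\partial\sigma_n,x) + \bigl(h_\fF(\sigma_n,x) - h_\fF(\partial\sigma_n,x)\bigr)$, no symmetrization needed --- but the logical order in the paper is different from yours. The paper does not feed part~(b) into~(c); instead, once $\qQ_{\fF,n}$ is interlacing it reads off $p_{\fF,n,0} \preceq p_{\fF,n,n}$, equivalently that $x^{n-1}h_\fF(\sigma_n,1/x)$ interlaces $h_\fF(\sigma_n,x)$, and invokes \cite[Theorem~2.6]{BS20} to conclude the symmetric decomposition is real-rooted; the real-rootedness of $h_\fF(\partial\sigma_n,x)$ is deduced \emph{from} this. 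Part~(b) comes last: $h_\fF(\sigma_n,x)$ and $h_\fF(\partial\sigma_n,x)$ are the sums of the interlacing sequences $\pP_{\fF,n}$ and $\qQ_{\fF,n-1}$, each interlaced termwise by $h_\fF(\sigma_{n-1},x) = p_{\fF,n-1,0}(x)$.
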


Some comments on the hypotheses of the theorem are in
order. We recall the fact \cite[Proposition~3.3]{Wa92}, 
to be used in the sequel, that if two real-rooted 
polynomials with positive leading coefficients 
interlace (respectively, are interlaced by) a nonzero
real-rooted polynomial $g(x)$, then their sum is also
real-rooted and interlaces (respectively, is interlaced 
by) $g(x)$.
\begin{remark} \label{rem:intpoint} \rm
(a) The degree of $h_\fF(\sigma_m, x)$ is at most 
$m-1$ for every positive integer $m$. The last 
sentence of 
Proposition~\ref{prop:hsymmetry} implies that the 
coefficient of $x^{m-1}$ in $h_\fF(\sigma_m, x)$ is
equal to the number $f_\fF^\circ(1,m)$ of interior 
vertices of any $\fF$-uniform triangulation of 
$\sigma_m$. Therefore, the coefficient of $x^{m-1}$ 
in $h_\fF(\sigma_m, x) - h_\fF(\partial \sigma_m, 
x)$ is one less than this number. As a 
consequence, a necessary condition for assumption 
(ii) to be valid is that there exists at least one 
such interior vertex (this is also sufficient for 
$m=2$). No such vertex exists for the 
$r$-fold edgewise subdivision of $\sigma_m$, unless 
$r \ge m$. This matches the range of values of $r$ 
for which \cite[Theorem~1.1]{Jo18} holds.

(b) A sufficient condition for $h_\fF(\sigma_m, x) - 
h_\fF(\partial \sigma_m, x)$ to have nonnegative 
coefficients is that no facet of some $\fF$-uniform 
triangulation of $\sigma_m$ has all vertices on the 
boundary of $\sigma_m$. This is a consequence of 
\cite[Theorem~2.1]{Sta93}.

(c) Assumption (ii) cannot be dropped from the 
hypotheses, even if the assumption that $h_\fF(\sigma_m, 
x)$ is interlaced by $h_\fF(\sigma_{m-1}, x)$ for all 
$m < n$ is added. Indeed, let $d = \infty$ and suppose 
that $\fF$ is the $f$-triangle for 2-fold edgewise 
subdivision. Equation~(\ref{eq:BW09}) implies that
\[ h_\fF(\sigma_n, x) \ = \ \sum_{k \ge 0} 
	 {n \choose 2k} x^k. \]
Setting 
\[ g_n(x) \ := \ \sum_{k \ge 0} {n \choose 2k+1} x^k, 
	 \]
it is well known (see, for instance, 
\cite[Theorem~7.64]{Fi06} \cite[Proposition~3.4]{Jo18})
that these two polynomials are real-rooted and that 
$g_n(x)$ interlaces $h_\fF(\sigma_n, x)$ for every $n$. 
As a result, the latter  
interlaces $xg_n(x)$. Since, by the standard recurrence 
for binomial coefficients, we have $h_\fF(\sigma_{n+1}, 
x) = h_\fF(\sigma_n, x) + xg_n(x)$, it follows that 
$h_\fF(\sigma_n, x)$ interlaces $h_\fF(\sigma_{n+1}, x)$
for every $n$ as well. However, as noted in 
\cite[p.~552]{BW09}, the polynomial $h_\fF(\partial
\sigma_5, x)$ is not real-rooted and hence conclusion 
(a) of Theorem~\ref{thm:mainBB} fails in this case.
\qed
\end{remark}

To prepare for the proof of Theorem~\ref{thm:mainBB},
we focus on the polynomials $p_{\fF,n,k}(x)$. We have
already shown that they have nonnegative coefficients
and that:

\begin{eqnarray}
p_{\fF,n,0}(x) & = & h_\fF(\sigma_n, x),
\label{eq:ppoly-one} \\
p_{\fF,n,k}(x) & = & p_{\fF,n,k-1}(x) + (x-1) 
p_{\fF,n-1,k-1}(x), \ \ \textrm{for} \ k \ge 1, 
\label{eq:ppoly-two} \\
x^n p_{\fF,n,k} (1/x) & = & p_{\fF,n,n-k} (x), 
\label{eq:ppoly-three} \\ & & \nonumber \\
\sum_{k=0}^n p_{\fF,n,k}(x) & = & h_\fF
(\partial \sigma_{n+1}, x). \label{eq:ppoly-four}
\end{eqnarray}

\medskip
Given that $h_\fF(\sigma_n, x)$ has nonnegative 
coefficients and constant term equal to 1 for every 
$n \in \NN$, recurrence~(\ref{eq:ppoly-two}) shows 
that the polynomials $p_{\fF,n,k}(x)$ are nonzero and 
that for fixed $n \in \NN$, their degrees are 
increasing in $k$.

Throughout this section, we set $p_{\fF,n-1,n}(x) := 
h_\fF(\sigma_n, x) - h_\fF(\partial\sigma_n, x)$ for 
$n \ge 1$. This polynomial has zero constant term 
and, as explained in \cite[Remark~4.8]{KMS19}, it 
is always symmetric, with center of symmetry $n/2$ 
(but it does not necessarily have nonnegative 
coefficients).
\begin{lemma} \label{lem:recurrence} 
For every $f$-triangle $\fF$ of size $d$, the 
recurrence
\begin{equation}
\label{eq:prec-long}
p_{\fF,n,k}(x) \ = \ x \sum_{i=0}^{k-1} 
p_{\fF,n-1,i}(x) \, + \, \sum_{i=k}^n p_{\fF,n-1,i}(x) 
\end{equation}
holds for $0 \le k \le n \le d$, where 
$p_{\fF,n-1,n}(x) = h_\fF(\sigma_n, x) - h_\fF(\partial
\sigma_n, x)$.
\end{lemma}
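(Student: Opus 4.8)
The plan is to induct on $k$, using the basic recurrence~(\ref{eq:ppoly-two}) as the engine and handling the boundary term $p_{\fF,n-1,n}(x)$ separately. First I would observe that the case $k=0$ is exactly Equation~(\ref{eq:ppoly-one}) combined with the definition $p_{\fF,n-1,n}(x) := h_\fF(\sigma_n,x) - h_\fF(\partial\sigma_n,x)$ and Equation~(\ref{eq:ppoly-four}): indeed $\sum_{i=0}^{n-1} p_{\fF,n-1,i}(x) = h_\fF(\partial\sigma_n,x)$ by~(\ref{eq:ppoly-four}) with $n$ replaced by $n-1$, so the right-hand side of~(\ref{eq:prec-long}) for $k=0$ becomes $h_\fF(\partial\sigma_n,x) + p_{\fF,n-1,n}(x) = h_\fF(\sigma_n,x) = p_{\fF,n,0}(x)$, as desired.

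For the inductive step, suppose~(\ref{eq:prec-long}) holds for $k-1$ in place of $k$ (with $1 \le k \le n$). By recurrence~(\ref{eq:ppoly-two}),
\[
p_{\fF,n,k}(x) \ = \ p_{\fF,n,k-1}(x) + (x-1)\, p_{\fF,n-1,k-1}(x).
\]
Substituting the induction hypothesis for $p_{\fF,n,k-1}(x)$, namely $x\sum_{i=0}^{k-2} p_{\fF,n-1,i}(x) + \sum_{i=k-1}^n p_{\fF,n-1,i}(x)$, and then combining with the $(x-1)p_{\fF,n-1,k-1}(x)$ term, the coefficient of $p_{\fF,n-1,k-1}(x)$ becomes $1 + (x-1) = x$. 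This moves the index $k-1$ from the ``unweighted'' block into the ``weighted by $x$'' block, yielding precisely $x\sum_{i=0}^{k-1} p_{\fF,n-1,i}(x) + \sum_{i=k}^n p_{\fF,n-1,i}(x)$, which is the right-hand side of~(\ref{eq:prec-long}) for $k$. The induction then closes.

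The only subtle point — and the step I expect to require the most care — is making sure the convention $p_{\fF,n-1,n}(x) = h_\fF(\sigma_n,x) - h_\fF(\partial\sigma_n,x)$ is consistently respected at the top of the index range, since this ``virtual'' polynomial is not one of the originally defined $p_{\fF,n,k}(x)$ and in particular need not have nonnegative coefficients. Concretely, when $k = n$ the last term $\sum_{i=k}^n p_{\fF,n-1,i}(x)$ is just $p_{\fF,n-1,n}(x)$, and I should check that recurrence~(\ref{eq:ppoly-two}) is still being applied legitimately — i.e. that $p_{\fF,n,n}(x) = p_{\fF,n,n-1}(x) + (x-1)p_{\fF,n-1,n-1}(x)$ holds, which it does since $n-1 \ge 0$ and this is an honest instance of~(\ref{eq:ppoly-two}). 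Everything else is a bookkeeping rearrangement of finite sums, so once the $k=0$ base case is correctly matched to~(\ref{eq:ppoly-four}) the argument is routine.
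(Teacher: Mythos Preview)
Your proposal is correct and follows essentially the same approach as the paper: both arguments use the short recurrence~(\ref{eq:ppoly-two}) together with Equations~(\ref{eq:ppoly-one}) and~(\ref{eq:ppoly-four}) and the convention for $p_{\fF,n-1,n}(x)$. The only cosmetic difference is that the paper telescopes the recurrence~(\ref{eq:ppoly-two}) over $1 \le i \le k$ in one step, whereas you phrase the same computation as a formal induction on $k$.
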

\begin{proof}
For $k \ge 1$, replacing $k$ by $i$ in the 
recurrence~(\ref{eq:ppoly-two}) and summing for $1 \le 
i \le k$, we get
\[ p_{\fF,n,k}(x) \ = \ p_{\fF,n,0}(x) \, + \, x
\sum_{i=0}^{k-1} p_{\fF,n-1,i}(x) \, - \, \sum_{i=0}^{k-1} 
p_{\fF,n-1,i}(x) \]
(which holds trivially for $k=0$).
Equation~(\ref{eq:prec-long}) follows from this equality, 
when combined with Equations~(\ref{eq:ppoly-one}) 
and~(\ref{eq:ppoly-four}).
\end{proof}

A sequence $(g_0(x), g_1(x),\dots,g_m(x))$ of real-rooted
polynomials is called \emph{interlacing} if $g_i(x)$ 
interlaces $g_j(x)$ for all $0 \le i < j \le m$. For that
to happen, it suffices to require that $g_{i-1}(x)$ 
interlaces $g_i(x)$ for all $1 \le i \le m$ and $g_0(x)$ 
interlaces $g_m(x)$ (see \cite[Lemma~2.3]{Bra06} 
\cite[Proposition~3.3]{Wa92}). We are now in position to 
prove Theorem~\ref{thm:mainBB}.

\medskip
\noindent
\emph{Proof of Theorem~\ref{thm:mainBB}.} The proof is 
motivated by \cite[Example~7.8.8]{Bra15} (essentially, the 
special case of barycentric subdivision). Let us consider 
the sequences

\begin{eqnarray*}
\pP_{\fF,n} & := & (p_{\fF,n-1,0}(x), 
               p_{\fF,n-1,1}(x),\dots,p_{\fF,n-1,n}(x)) \\ 
\qQ_{\fF,n} & := & (p_{\fF,n,0}(x), 
                    p_{\fF,n,1}(x),\dots,p_{\fF,n,n}(x)).
\end{eqnarray*}

\medskip
\noindent
By Theorem~\ref{thm:mainAB} and assumption (ii), all 
polynomials in these sequences have nonnegative 
coefficients. We will first show by induction on $n$ 
that these sequences are interlacing (in particular, 
their elements are real-rooted). This is true for 
$n=1$ since $p_{\fF,0,0}(x) = p_{\fF,1,0}(x) = 1$, 
$p_{\fF,0,1}(x) = 0$, $p_{\fF,1,1}(x) = x$,
$p_{\fF,1,1}(x) = (r-2)x$, where $r$ has the same 
meaning as in Example~\ref{ex:d=3}, and $r \ge 2$ by
assumption (ii).

For the inductive step, let us assume that $n \ge 2$ 
and that the sequences $\pP_{\fF,n-1}$ and 
$\qQ_{\fF,n-1}$ are interlacing. We will show that 
so are $\pP_{\fF,n}$ and $\qQ_{\fF,n}$.
As in \cite[Example~7.8.8]{Bra15}, 
Lemma~\ref{lem:recurrence} shows, by an application 
of \cite[Corollary~7.8.7]{Bra15}, that the 
interlacing property for $\pP_{\fF,n}$ implies 
that for $\qQ_{\fF,n}$. Therefore, we only need to 
show that $\pP_{\fF,n}$ is interlacing. Since the 
first $n$ terms of $\pP_{\fF,n}$ form $\qQ_{\fF,n-1}$, 
we already know that $p_{\fF,n-1,i}(x)$ interlaces 
$p_{\fF,n-1,j}(x)$ for $0 \le i \le j \le n-1$. 
Moreover, by assumption (ii), $p_{\fF,n-1,0} (x) = 
h_\fF(\sigma_{n-1},x)$ interlaces $p_{\fF,n-1,n}(x) 
= h_\fF(\sigma_n,x) - h_\fF(\partial\sigma_n, x)$. 
Thus, by the weak transitivity property 
\cite[Lemma~2.3]{Bra06} of interlacing, mentioned 
earlier, it suffices to show that $p_{\fF,n-1,n-1} 
(x)$ interlaces $p_{\fF,n-1,n}(x)$. We may assume 
that the latter is nonzero, in which 
case it must have degree $n-1$ by assumption (ii). 
Since $p_{\fF,n-1,n}(x)$ has zero constant term, 
$p_{\fF,n-1,0} (x)$ is interlaced by $p_{\fF,n-1,n}
(x)/x$. Since both polynomials have degree 
$n-2$ (see Remark~\ref{rem:intpoint} (a)) and 
the latter is symmetric, with center of symmetry 
$(n-2)/2$, the polynomial $x^{n-2} p_{\fF,n-1,0}(1/x)$ 
interlaces $p_{\fF,n-1,n}(x)/x$ and therefore 
$p_{\fF,n-1,n-1} (x) = x^{n-1} p_{\fF,n-1,0}(1/x)$ 
interlaces $p_{\fF,n-1,n}(x)$. This completes the 
induction. 

Suppose now that $h(x) \in \RR[x]$ has nonnegative 
coefficients. Then, by Definition~\ref{def:hsdop}, 
$\dD_{\fF,n}(h(x))$ can be written as a nonnegative 
linear combination of the elements of $\qQ_{\fF,n}$.
Since the latter is interlacing, $\dD_{\fF,n}(h(x))$
is real-rooted \cite[Theorem~7.8.2]{Bra15}. This proves 
part (a). We have already shown that $h_\fF(\sigma_n, 
x) = p_{\fF,n,0} (x)$ is real-rooted. The equality 
\begin{equation}
\label{eq:symdecomp}
h_\fF(\sigma_n, x) \ = \  h_\fF(\partial\sigma_n, 
x) + (h_\fF(\sigma_n, x) - h_\fF(\partial\sigma_n, x)), 
\end{equation}
combined with assumption 
(ii), shows that it also has a nonnegative symmetric
decomposition with respect to $n-1$. Since, as we 
have already shown, $h_\fF(\sigma_n, x) = p_{\fF,n,0} 
(x)$ interlaces 
$p_{\fF,n,n} (x) = x^n h_\fF(\sigma_n, 1/x)$, we 
also have that $h_\fF(\sigma_n, x)$ is interlaced  
by $x^{n-1} h_\fF(\sigma_n, 1/x)$. These facts and 
\cite[Theorem~2.6]{BS20} imply that $h_\fF(\sigma_n, 
x)$ has a nonnegative, real-rooted and interlacing 
symmetric decomposition with respect to $n-1$. This 
proves the last statement of part (b) and that 
$h_\fF(\partial\sigma_n, x)$ is real-rooted as well. 

Finally, recall that
$h_\fF(\sigma_{n-1}, x)$ interlaces all elements of 
$\pP_{\fF,n}$ and $\qQ_{\fF,n-1}$. As a result 
\cite[Proposition~3.3]{Wa92}, it also interlaces the 
sums of the elements of these sequences, which equal 
$h_\fF(\sigma_n, x)$ and $h_\fF(\partial\sigma_n, x)$, 
respectively. This completes the proof of part (b).
\qed

\section{Applications}
\label{sec:apps}

We now discuss how Theorem~\ref{thm:mainB} applies to 
the motivating examples of Section~\ref{sec:examples}.

\begin{example} \label{ex:sd} \rm
Consider the barycentric subdivision and let $\fF$ 
be the corresponding $f$-triangle of infinite size. 
Then, $h_\fF(\sigma_m, x) = h_\fF(\partial\sigma_m, 
x)$ is the $m$th Eulerian polynomial (see
\cite[Section~1.4]{StaEC1}) and thus, assumption (ii)
of Theorem~\ref{thm:mainBB} is satisfied trivially. 
Given Theorem~\ref{thm:mainBB}, a straightforward 
induction on $n$ then shows that $\dD_{\fF,n}(h(x))$ 
is real-rooted for every polynomial $h(x) \in \RR_n
[x]$ with nonnegative coefficients. In particular, 
$h(\sd(\Delta), x)$ is real-rooted for every 
simplicial complex $\Delta$ with nonnegative 
$h$-vector. Hence, Theorem~\ref{thm:mainBB} recovers 
the main result of \cite{BW08} in this case.
\end{example}
\begin{example} \label{ex:esdr} \rm
Let $\fF$ be the $f$-triangle of infinite size which 
corresponds to the $r$-fold edgewise subdivision. 
Setting $c_{n,r}(x) := (1 + x + x^2 + \cdots + 
x^{r-1})^n$, from Equation~(\ref{eq:BW09}) we deduce 
that $h_\fF(\sigma_n, x) = (c_{n,r}(x))^{\langle r,0 
\rangle}$ and 
\[ h_\fF(\partial \sigma_n, x) \ = \ \left( 
c_{n-1,r}(x) (1 + x + x^2 + \cdots + x^{n-1}) 
\right)^{\langle r,0 \rangle}. \]
It is well known \cite[Example~3.76]{Fi06} 
\cite[Proposition~3.4]{Jo18} that the sequence 
$(c_{n,r}(x))^{\langle r,r-i \rangle}_{1 \le i \le r}$
is interlacing. We deduce that, for $r \ge m$,  
\begin{eqnarray*}
h_\fF(\sigma_m, x) - h_\fF(\partial \sigma_m, x) 
& = & \left( c_{m,r}(x) \right)^{\langle r,0 \rangle} 
- \left( c_{m-1,r}(x) (1 + x + x^2 + \cdots + x^{m-1}) 
\right)^{\langle r,0 \rangle} \\ & = & 
\left( c_{m-1,r}(x) (x^m + x^{m+1} + \cdots + x^{r-1})  
\right)^{\langle r,0 \rangle} \\ & = & 
\sum_{i=m}^{r-1} x (c_{m-1,r}(x))^{\langle r,r-i \rangle} 
\end{eqnarray*}
is interlaced by $(c_{m-1,r}(x))^{\langle r,0 \rangle} 
= h_\fF(\sigma_{m-1}, x)$, since the latter is interlaced 
by all of the $(c_{m-1,r}(x))^{\langle r,r-i \rangle}$. 
Hence, all assumptions of Theorem~\ref{thm:mainBB} 
are satisfied when $r \ge n$ and the theorem recovers 
the case $i=0$ of \cite[Theorem 1.1]{Jo18}. 

It was observed in \cite[p.~15]{Ath16b} that 
\begin{equation}
\label{eq:BHMS}
h_\fF(\sigma_{n+1}, x) \ = {\displaystyle
\sum_{w \in \{0, 1\dots,r-1\}^n} x^{\asc(w)}}, 
\end{equation}
where for $w = (w_1, w_2,\dots,w_n) \in \{0,
1\dots,r-1\}^n$, $\asc(w)$ denotes the number of 
indices $i \in \{0, 1,\dots,n-1\}$ such that $w_i < 
w_{i+1}$, with the convention that $w_0 := 0$. Thus, 
part (b) of Theorem~\ref{thm:mainBB} yields the 
following corollary.

\begin{corollary} \label{cor:BHMS} 
For every positive integer $n$, the polynomial on 
the right-hand side of (\ref{eq:BHMS}) has a 
nonnegative, real-rooted and interlacing symmetric 
decomposition with respect to $n$, for every 
$r \ge n+1$.
\qed
\end{corollary}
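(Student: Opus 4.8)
The plan is to deduce Corollary~\ref{cor:BHMS} directly from part (c) of Theorem~\ref{thm:mainBB} applied to the $f$-triangle $\fF$ of the $r$-fold edgewise subdivision, by checking that the hypotheses of Theorem~\ref{thm:mainBB} hold for this $\fF$ whenever $r \ge n+1$. In Example~\ref{ex:esdr} it has already been verified that assumptions (i) and (ii) of Theorem~\ref{thm:mainBB} are satisfied for this $\fF$ for all $m \le N$ as long as $r \ge N$; this is the one nontrivial input, and it relies on the interlacing of the sequence $(c_{m,r}(x))^{\langle r,r-i\rangle}_{1\le i\le r}$. So the first step is simply to invoke Example~\ref{ex:esdr}: taking $N = n+1$ and $r \ge n+1$, all hypotheses of Theorem~\ref{thm:mainBB} hold with the role of ``$n$'' in that theorem played by $n+1$.

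Next I would apply Theorem~\ref{thm:mainBB}(c) with this choice: it asserts that $h_\fF(\sigma_{n+1}, x)$ has a nonnegative real-rooted symmetric decomposition with respect to $(n+1)-1 = n$. The final step is to identify $h_\fF(\sigma_{n+1}, x)$ with the polynomial on the right-hand side of~(\ref{eq:BHMS}); this identification is exactly the content of the cited observation from~\cite[p.~15]{Ath16b}, recorded as Equation~(\ref{eq:BHMS}) in the excerpt. Combining these two facts gives the statement of the corollary.

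I do not anticipate a serious obstacle, since essentially all the work has been done: the real-rootedness and symmetry machinery is packaged in Theorem~\ref{thm:mainBB}, and Example~\ref{ex:esdr} has already discharged its hypotheses for the edgewise subdivision in the required range of $r$. The only points requiring care are bookkeeping ones: making sure the index shift is handled correctly (the corollary is about $\sigma_{n+1}$, so the relevant instance of Theorem~\ref{thm:mainBB} is at parameter $n+1$, and the decomposition is with respect to $n$, not $n+1$), and confirming that the condition ``$r \ge n+1$'' in the corollary is precisely the condition ``$r \ge m$ for all $m \le n+1$'' needed to make the computation in Example~\ref{ex:esdr} valid at every relevant level. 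Both are immediate, so the proof is short.
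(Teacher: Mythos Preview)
Your proposal is correct and follows exactly the paper's own argument: the corollary is placed immediately after Equation~(\ref{eq:BHMS}) with a \qed\ precisely because it is obtained by applying Theorem~\ref{thm:mainBB}(c) at parameter $n+1$ (so the decomposition is with respect to $n$), the hypotheses having just been verified in Example~\ref{ex:esdr} for $r \ge n+1$, and then identifying $h_\fF(\sigma_{n+1},x)$ with the ascent polynomial via~(\ref{eq:BHMS}). Your bookkeeping on the index shift and the range of $r$ is accurate.
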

\end{example}
\begin{example} \label{ex:sdrs} \rm
To illustrate the applicability of 
Theorem~\ref{thm:mainBB}, consider the following
generalization of the two previous examples. Fix  
integers $1 \le s < r$, think of $\Delta$ as a 
geometric simplicial complex and construct a 
triangulation $\sd_{r,s}(\Delta)$ of $\Delta$ as 
follows. First triangulate the $s$-skeleton of $\Delta$ 
with the $r$-fold edgewise subdivision. Then, for $j 
\ge s$ and assuming the triangulation of the 
$j$-skeleton of $\Delta$ has been defined, triangulate 
each $(j+1)$-dimensional face of $\Delta$ by inserting 
one point in the relative interior of that face and 
coning over its boundary, which is already 
triangulated by induction. 

This process defines a triangulation $\sd_{r,s}(\Delta)$ 
of $\Delta$ which reduces to $\sd(\Delta)$ for $s=1$ and 
$r=2$, and to $\esd_r(\Delta)$ when $s$ is at least as 
large as the dimension of $\Delta$. As verified  
in Example~\ref{ex:esdr}, assumption (ii) of 
Theorem~\ref{thm:mainBB} is satisfied for $m \le s$.
Moreover, it is trivially satisfied for $m > s$, since 
then $h_\fF(\sigma_m, x) - h_\fF(\partial\sigma_m, x) = 
0$ (the operation of coning for simplicial complexes 
leaves the $h$-polynomial invariant). A straightforward 
induction on $n$ then shows that all conclusions in the 
statement of Theorem~\ref{thm:mainBB} hold. It would 
be interesting to interpret combinatorially the 
coefficients $p_\fF(n,k,j)$ for this example.
\qed
\end{example}

For the $r$-colored barycentric subdivision (in 
particular, for the interval triangulation), 
the conclusion (a) of Theorem~\ref{thm:mainBB} 
can be deduced from known results. It would still 
be interesting to decide whether assumption (ii)
of the theorem is valid in this case; this question
will be answered in a forthcoming paper.
\begin{proposition} \label{prop:csdr} 
Let $r$ be any positive integer and $\fF$ be the 
$f$-triangle of infinite size for the $r$-colored 
barycentric subdivision. Then, the polynomial 
$\dD_{\fF,n}(h(x))$ is real-rooted for every $h(x) 
\in \RR_n[x]$ with nonnegative coefficients. 

In particular, the $h$-polynomial of the $r$-colored 
barycentric subdivision of $\Delta$ is real-rooted 
for every simplicial complex $\Delta$ with nonnegative 
$h$-vector.
\end{proposition}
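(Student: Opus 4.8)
The plan is to exploit the fact that the $r$-colored barycentric subdivision of $\Delta$ is, by definition, $\esd_r(\sd(\Delta))$, and thereby to factor the operator $\dD_{\fF,n}$ as the composition of the subdivision operator of the ordinary barycentric subdivision with that of the $r$-fold edgewise subdivision, after which real-rootedness is inherited from each factor. Write $\dD_{\sd,n}$ and $\dD_{\esd,n}$ for the operators of Definition~\ref{def:hsdop} attached to the $f$-triangles of the barycentric and the $r$-fold edgewise subdivision, respectively. The first step is the operator identity
\[ \dD_{\fF,n} \ = \ \dD_{\esd,n} \circ \dD_{\sd,n} \]
on $\RR_n[x]$. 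To prove it, fix an $(n-1)$-dimensional complex $\Delta$; then $\sd(\Delta)$ is again $(n-1)$-dimensional and $\esd_r(\sd(\Delta))$ is simultaneously the $r$-colored barycentric subdivision of $\Delta$ (hence an $\fF$-uniform triangulation of $\Delta$) and a triangulation of $\sd(\Delta)$ of edgewise type, whose restrictions to faces of $\sd(\Delta)$ are $r$-fold edgewise subdivisions. Applying Proposition~\ref{prop:hsdop}~(a) repeatedly then gives
\[ \dD_{\fF,n}(h(\Delta,x)) \ = \ h(\esd_r(\sd(\Delta)),x) \ = \ \dD_{\esd,n}(h(\sd(\Delta),x)) \ = \ \dD_{\esd,n}\big(\dD_{\sd,n}(h(\Delta,x))\big). \]
Since the polynomials $h(\Delta,x)$ span $\RR_n[x]$ as $\Delta$ ranges over $(n-1)$-dimensional complexes --- for instance $1 + x + \cdots + x^j$ arises as $h(\partial\sigma_{j+1} * \sigma_{n-j}, x)$ for each $0 \le j \le n$ by the product formula for $h$-polynomials of joins, and these form a basis --- the two linear operators coincide.

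The second step controls the two factors on polynomials with nonnegative coefficients. By Example~\ref{ex:sd}, which is the main theorem of Brenti--Welker~\cite{BW08} in operator form, $\dD_{\sd,n}(g)$ is real-rooted for every $g \in \RR_n[x]$ with nonnegative coefficients; moreover $\dD_{\sd,n}(g) \in \RR_n[x]$ and has nonnegative coefficients, since each $p_{\sd,n,k}(x)$ does by Theorem~\ref{thm:mainAB}. By Example~\ref{ex:hsdop-esd} the operator $\dD_{\esd,n}$ is $g(x) \mapsto \big((1 + x + \cdots + x^{r-1})^n g(x)\big)^{\langle r,0\rangle}$, and by \cite[Theorem~4.5.6]{Bre89} (or \cite[Corollary~3.4]{Zh20}) this operator takes a real-rooted polynomial of degree at most $n$ with nonnegative coefficients to a real-rooted polynomial. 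Combining these with the identity of the first step, $\dD_{\fF,n}(g) = \dD_{\esd,n}(\dD_{\sd,n}(g))$ is real-rooted for every $g \in \RR_n[x]$ with nonnegative coefficients.

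For the final assertion, if $\Delta$ is an $(n-1)$-dimensional complex with nonnegative $h$-vector then $h(\Delta,x) \in \RR_n[x]$ has nonnegative coefficients, and by Proposition~\ref{prop:hsdop}~(a) the $h$-polynomial of its $r$-colored barycentric subdivision equals $\dD_{\fF,n}(h(\Delta,x))$, hence is real-rooted. The only step beyond routine bookkeeping is the invocation of \cite[Theorem~4.5.6]{Bre89}/\cite[Corollary~3.4]{Zh20} for the edgewise factor, where it matters both that the input $\dD_{\sd,n}(g)$ is \emph{already} real-rooted (so that no lower bound on $r$ is needed, in contrast with the pure edgewise situation of Example~\ref{ex:esdr}) and that its degree bound $n$ matches the exponent in $(1 + x + \cdots + x^{r-1})^n$; I expect checking that these hypotheses are met to be the main --- and essentially the only non-formal --- point, the operator factorization and the spanning remark being routine.
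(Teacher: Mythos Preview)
Your proof is correct and follows essentially the same route as the paper's own proof: factor the $r$-colored barycentric subdivision as edgewise-after-barycentric, apply Brenti--Welker~\cite{BW08} to the barycentric factor to obtain a real-rooted polynomial with nonnegative coefficients (hence only nonpositive real roots), and then invoke \cite[Theorem~4.5.6]{Bre89} or \cite[Corollary~3.4]{Zh20} for the edgewise factor. The paper's proof is simply a two-sentence version of yours, with the operator factorization $\dD_{\fF,n}=\dD_{\esd,n}\circ\dD_{\sd,n}$ and the spanning argument left implicit.
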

\begin{proof}
This is because, by \cite[Theorem~3.1]{BW08}, 
barycentric subdivision transforms any $h$-polynomial 
with nonnegative coefficients to one with only real
nonpositive roots and $r$-fold edgewise subdivision 
(for instance, by \cite[Theorem~4.5.6]{Bre89} or
\cite[Corollary~3.4]{Zh20}) transforms any 
$h$-polynomial with only real nonpositive roots to 
one with the same property. 
\end{proof}

As already mentioned in the proof of 
Proposition~\ref{prop:csdr-coef}, for the $r$-colored 
barycentric subdivision the polynomial 
$h_\fF(\sigma_n, x)$ is known to equal
\[ A^+_{n,r}(x) \ := \sum_{w \in (\ZZ_r \wr \fS_n)^+} 
   x^{\des(w)}, \]
where $(\ZZ_r \wr \fS_n)^+$ is the set of colored 
permutations $w \in \ZZ_r \wr \fS_n$ with first 
coordinate of zero color and $\des(w)$ stands for 
the number of descents of $w \in \ZZ_r \wr \fS_n$.
This polynomial is real-rooted by 
Proposition~\ref{prop:csdr}. The following statement 
partially confirms conclusion (b) of 
Theorem~\ref{thm:mainBB} in this case.
\begin{proposition} \label{prop:binom} 
For all positive integers $n,r$, the polynomial 
$A^+_{n,r}(x)$ has a nonnegative, real-rooted and 
interlacing symmetric decomposition with respect to 
$n-1$.
\end{proposition}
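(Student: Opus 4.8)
The plan is to produce a nonnegative real-rooted symmetric decomposition of $A^+_{n,r}(x) = h_\fF(\sigma_n,x)$ with respect to $n-1$, i.e.\ to write $A^+_{n,r}(x) = a(x) + x\,b(x)$ with $a,b$ symmetric (centers $(n-1)/2$ and $(n-2)/2$) and both real-rooted with nonnegative coefficients. By Proposition~\ref{prop:csdr} we already know $A^+_{n,r}(x)$ is itself real-rooted. In the light of \cite[Theorem~2.6]{BS20}, it therefore suffices to show that $A^+_{n,r}(x)$ is interlaced by $x^{n-2}A^+_{n,r}(1/x) = $ (the ``reciprocal shifted down by one''). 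Equivalently, via \cite[Theorem~2.6]{BS20}, it is enough to exhibit \emph{some} real-rooted polynomial $g(x)$ of degree $\le n-2$ with nonnegative coefficients that interlaces $A^+_{n,r}(x)$ and is the $b$-part, or — following the route taken in the proof of Theorem~\ref{thm:mainBB}(c) — to show that $A^+_{n,r}(x) = p_{\fF,n,0}(x)$ interlaces $p_{\fF,n,n}(x) = x^n A^+_{n,r}(1/x)$.

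The cleanest approach is to run the $r$-colored barycentric subdivision through the same machinery used in Section~\ref{sec:roots}, but bootstrapped from the edgewise side. First I would recall that $\sd_r := \esd_r \circ \sd$, so $h_\fF(\sigma_n,x)$ is obtained by applying the edgewise operator $\dD^{\esd_r}$ to the Eulerian polynomial $A_n(x) = h(\sd(\sigma_n),x)$. Since the edgewise operator of Example~\ref{ex:hsdop-esd} has the form $h(x)\mapsto\big((1+x+\cdots+x^{r-1})^n h(x)\big)^{\langle r,0\rangle}$, and since the full family $\big((1+x+\cdots+x^{r-1})^n h(x)\big)^{\langle r,r-i\rangle}_{1\le i\le r}$ is interlacing whenever $h$ is real-rooted with nonnegative coefficients (this is the content of \cite[Theorem~4.5.6]{Bre89} / \cite[Corollary~3.4]{Zh20}, as used in Proposition~\ref{prop:csdr}), I get that $A^+_{n,r}(x)$ sits at the top of an interlacing sequence whose sum is $(1+x+\cdots+x^{r-1})^n A_n(x)$ evaluated appropriately. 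The symmetry of $A^+_{n,r}(x)$ needed for the decomposition's $a$-part comes from Proposition~\ref{prop:hsymmetry} applied to the boundary: $h_\fF(\partial\sigma_n,x)$ is symmetric with center $(n-1)/2$, and $h_\fF(\sigma_n,x) - h_\fF(\partial\sigma_n,x)$ is symmetric with center $n/2$ by the remark preceding Lemma~\ref{lem:recurrence}; this is exactly the decomposition $a(x) = h_\fF(\partial\sigma_n,x)$, $x\,b(x) = h_\fF(\sigma_n,x) - h_\fF(\partial\sigma_n,x)$.

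So the concrete steps are: (1) record $h_\fF(\sigma_n,x) = A^+_{n,r}(x)$ and $h_\fF(\partial\sigma_n,x)$ via the edgewise formula $h(\esd_r(\Delta),x) = \big((1+x+\cdots+x^{r-1})^n h(\Delta,x)\big)^{\langle r,0\rangle}$ applied to $\Delta = \sd(\sigma_n)$ and $\Delta = \partial\sd(\sigma_n) = \sd(\partial\sigma_n)$; (2) as in Example~\ref{ex:esdr}, compute $h_\fF(\sigma_n,x) - h_\fF(\partial\sigma_n,x)$ as a nonnegative combination of the pieces $x\,(c(x))^{\langle r,r-i\rangle}$ for a suitable real-rooted $c(x)$ with nonnegative coefficients (here $c(x)$ will involve the Eulerian polynomial $A_{n-1}(x)$ together with a $q$-integer factor), so that it is either zero or real-rooted of degree $n-1$ with nonnegative coefficients; (3) check that this difference is interlaced by $h_\fF(\sigma_{n-1},x) = A^+_{n-1,r}(x)$, again because the latter is interlaced by each piece $(c(x))^{\langle r,r-i\rangle}$; (4) invoke \cite[Theorem~2.6]{BS20} (or the BS20 criterion used in the proof of Theorem~\ref{thm:mainBB}(c)) together with the real-rootedness of $A^+_{n,r}(x)$ from Proposition~\ref{prop:csdr} to conclude that $A^+_{n,r}(x) = h_\fF(\partial\sigma_n,x) + \big(h_\fF(\sigma_n,x) - h_\fF(\partial\sigma_n,x)\big)$ is the desired nonnegative real-rooted symmetric decomposition.

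The main obstacle will be step (2)–(3): verifying that $h_\fF(\sigma_n,x) - h_\fF(\partial\sigma_n,x)$ really does factor, after passing through the $\langle r,0\rangle$ operation, as a nonnegative sum of interlacing pieces headed (or interlaced) by $A^+_{n-1,r}(x)$. For the pure edgewise case in Example~\ref{ex:esdr} this worked because $h(\partial\sigma_m,x)$ was $c_{m-1,r}(x)(1+x+\cdots+x^{m-1})$ before the $\langle r,0\rangle$, giving the clean telescoping to $c_{m-1,r}(x)(x^m+\cdots+x^{r-1})$; here the barycentric twist replaces the simple powers by Eulerian polynomials, so I will need the identity $A_n(x) = A_{n-1}(x)\cdot(\text{something})$-type relation, or more precisely the standard recursion relating $h(\sd(\sigma_n),x)$ and $h(\sd(\partial\sigma_n),x)$, namely that $A_n(x) - $ (the boundary contribution) is the interior local $h$-polynomial contribution — this is where I expect the bookkeeping to require care, but no genuinely new idea beyond combining Example~\ref{ex:esdr} with the known facts about Eulerian polynomials and the interlacing behavior of the edgewise $\langle r,i\rangle$-decomposition.
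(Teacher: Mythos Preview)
Your overall strategy matches the paper's: take the symmetric decomposition $A^+_{n,r}(x) = h_\fF(\partial\sigma_n,x) + \bigl(h_\fF(\sigma_n,x) - h_\fF(\partial\sigma_n,x)\bigr)$, show each summand is real-rooted with nonnegative coefficients via the edgewise formula and the interlacing of the $\langle r,i\rangle$-pieces. However, you are missing one key simplification and adding two unnecessary steps.

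The simplification you are missing dissolves exactly the ``main obstacle'' you anticipate. The crucial fact is that $h(\sd(\sigma_n),x) = h(\sd(\partial\sigma_n),x) = A_n(x)$: coning leaves the $h$-polynomial unchanged, so the barycentric $h$-polynomials of the simplex and its boundary coincide. Hence there is \emph{no} recursion between $A_n(x)$ and $A_{n-1}(x)$ to worry about, and the polynomial $c(x)$ you seek involves $A_n(x)$, not $A_{n-1}(x)$. Concretely, the paper computes
\[
h_\fF(\sigma_n,x) - h_\fF(\partial\sigma_n,x) \ = \ \bigl(c_{n-1,r}(x)\,A_n(x)\,(x+\cdots+x^{r-1})\bigr)^{\langle r,0\rangle} \ = \ \sum_{i=1}^{r-1} x\,\bigl(g(x)\bigr)^{\langle r,r-i\rangle}
\]
with $g(x) = (1+x+\cdots+x^{r-1})^{n-1} A_n(x)$; since $A_n(x)$ is real-rooted, \cite[Corollary~3.4]{Zh20} gives that the $\langle r,r-i\rangle$-pieces of $g$ form an interlacing sequence, so their sum is real-rooted.

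Your step~(3) --- showing the difference is interlaced by $A^+_{n-1,r}(x)$ --- is not needed here and is in fact left open in the paper (see the remark preceding Proposition~\ref{prop:csdr}). Likewise your step~(4) via \cite[Theorem~2.6]{BS20} is unnecessary: once you know directly that both summands are real-rooted with nonnegative coefficients, you are done; the paper does not pass through the interlacing-of-reciprocals criterion at this point.
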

\begin{proof}
Recall that (\ref{eq:symdecomp}) expresses the symmetric 
decomposition of $h_\fF(\sigma_n, x) = A^+_{n,r}(x)$, 
with respect to $n-1$. The polynomial $h_\fF(\partial
\sigma_n, x)$ has nonnegative coefficients, being the 
$h$-polynomial of a triangulation of a sphere, and only
real roots, by Proposition~\ref{prop:csdr}. To show 
that $h_\fF(\sigma_n, x) - h_\fF(\partial\sigma_n, x)$ 
has the same properties, we work as in 
Example~\ref{ex:esdr}. Using the notation adopted 
there, and since the $h$-polynomial of the barycentric
subdivision of $\sigma_n$ or its boundary is given by
the $n$th Eulerian polynomial $A_n(x)$ (see
\cite[Section~1.4]{StaEC1}), we find that
\begin{eqnarray*}
h_\fF(\sigma_n, x) - h_\fF(\partial \sigma_n, x) 
& = & \left( c_{n,r}(x) A_n(x) 
      \right)^{\langle r,0 \rangle} 
- \left( c_{n-1,r}(x) A_n(x) 
      \right)^{\langle r,0 \rangle} \\ & = & 
\left( c_{n-1,r}(x) A_n(x) (x + x^2 + \cdots + x^{r-1})  
\right)^{\langle r,0 \rangle} \\ & = & 
\sum_{i=1}^{r-1} x (g(x))^{\langle r,r-i \rangle},
\end{eqnarray*}
where $g(x) := c_{n-1,r}(x) \, A_n(x) = (1 + x + \cdots 
+ x^{r-1})^{n-1} A_n(x)$. Since $A_n(x)$ is real-rooted,
an application of \cite[Corollary~3.4]{Zh20} shows that 
the sequence $(g(x))^{\langle r,r-i \rangle}_{1 \le i 
\le r}$ is interlacing. Therefore,  
$(h_\fF(\sigma_n, x) - h_\fF(\partial \sigma_n, x))/x$ is 
real-rooted and interlaces $h_\fF(\partial \sigma_n, x) 
= (g(x))^{\langle r,0 \rangle}$ and the proof follows. 
\end{proof}

\section{Further directions}
\label{sec:rem}

Several interesting classes of polynomials, occurring 
in algebraic and geometric combinatorics, were shown 
to have nonnegative real-rooted symmetric 
decompositions in \cite{BS20}. The following statement
exhibits another such class.
\begin{proposition} \label{prop:brasolus} 
The polynomial $h(\sd(\Delta), x)$ has a nonnegative 
real-rooted symmetric decomposition with respect to 
$n-1$ for every triangulation $\Delta$ of the 
$(n-1)$-dimensional ball. 
\end{proposition}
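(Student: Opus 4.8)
The plan is to realize $h(\sd(\Delta),x)$ and the two halves of its symmetric decomposition directly in terms of the operator $\dD_{\fF,n}$ for $\fF$ the $f$-triangle of barycentric subdivision, and then to appeal to the interlacing structure already established. First I would recall from Example~\ref{ex:sd} that for barycentric subdivision $h_\fF(\sigma_m,x)=h_\fF(\partial\sigma_m,x)=A_m(x)$, the $m$th Eulerian polynomial, so assumption (ii) of Theorem~\ref{thm:mainBB} holds and the polynomials $p_{\fF,m,k}(x)$ are all real-rooted, forming the interlacing sequence $\qQ_{\fF,n}$. Since $\Delta$ triangulates the $(n-1)$-ball, $h(\Delta,x)$ has nonnegative coefficients, and by Proposition~\ref{prop:hsdop}(a) we have $h(\sd(\Delta),x)=\dD_{\fF,n}(h(\Delta,x))=\sum_{k=0}^n h_k(\Delta)\,p_{\fF,n,k}(x)$, a nonnegative combination of an interlacing sequence; so it is real-rooted. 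The point is that real-rootedness alone is not enough — I need the two symmetric pieces to be separately real-rooted.

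The key step is to split this combination using the symmetric/antisymmetric structure of the ball. Write $h(\Delta,x)=a(x)+xb(x)$ as in Proposition~\ref{prop:hsymmetry}: since $\Delta$ is a ball, $x^{n-1}h(\inte\Delta,1/x)$-type symmetry gives the relation $x^n h(\Delta,1/x)=h(\inte(\Delta),x)$, and one knows the symmetric decomposition of $h(\Delta,x)$ with respect to $n-1$ has $a(x)=\tfrac12(h(\Delta,x)+x^{n-1}h(\inte\Delta,x))$ and $xb(x)=\tfrac12(h(\Delta,x)-x^{n-1}h(\inte\Delta,x))$ — these are exactly the ``$h^-$'' and difference appearing in \cite{BS20}. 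The crucial observation is that barycentric subdivision commutes with passing to interiors: $\inte(\sd(\Delta))=\sd(\inte(\Delta))$ as relative complexes (a new vertex lies in the interior of $\sd(\Delta)$ iff its carrier chain includes a face of $\inte(\Delta)$), and more generally $\sd$ sends the relative complex $(\Delta,\partial\Delta)$ to $(\sd\Delta,\sd\partial\Delta)$. Hence $\dD_{\fF,n}$ intertwines the operation $h(x)\mapsto x^{n}h(1/x)$ up to the boundary correction, which by Corollary~\ref{cor:mainA}(a) (applied to the sphere $\partial\Delta$, where it applies verbatim) and Proposition~\ref{prop:hsymmetry} gives $x^n h(\sd(\Delta),1/x)=h(\inte(\sd\Delta),x)=h(\sd(\inte\Delta),x)=\dD_{\fF,n}(h(\inte(\Delta),x))$ in a suitable relative sense.

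From here the argument is: the symmetric part of $h(\sd(\Delta),x)$ with respect to $n-1$ equals $\tfrac12\bigl(h(\sd\Delta,x)+x^{n-1}h(\inte(\sd\Delta),x)\bigr)$, which by the intertwining equals a nonnegative linear combination $\sum_k h_k(\Delta)\cdot\tfrac12\bigl(p_{\fF,n,k}(x)+x^{n-1}\,[\text{interior version of }p_{\fF,n,k}]\bigr)$. The safer route, and the one I would actually write, is to invoke Theorem~\ref{thm:mainBB}(c) pointwise: since $h_\fF(\sigma_n,x)=A_n(x)$ has a nonnegative real-rooted symmetric decomposition with respect to $n-1$ (conclusion (c) of that theorem), and since for a ball $h(\Delta,x)=\sum_F \ell_F(\Delta_F,x)$ expresses $h$ as a sum of local $h$-polynomials which are symmetric, one writes $h(\sd(\Delta),x)=\sum_F h(\sd(2^F)/\relbdry,x)$ over faces $F$ of $\Delta$, grouping interior and boundary faces. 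Each interior face contributes a symmetric piece (center $n/2$ shape) and each boundary face contributes via the ball-relative version; real-rootedness of each grouped contribution follows from the interlacing sequence $\qQ_{\fF,n}$ together with Proposition~\ref{prop:relative}, which identifies $p_{\fF,n,k}$ with $h$-polynomials of relatively Cohen--Macaulay complexes $\Gamma_{n,k}$.

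The hard part will be bookkeeping the relative/interior decomposition so that the two halves $a(x)$ and $b(x)$ of the symmetric decomposition of $h(\sd\Delta,x)$ each emerge as a \emph{nonnegative} combination of a single interlacing family — one cannot just symmetrize term by term, because $p_{\fF,n,k}(x)$ and $p_{\fF,n,n-k}(x)$ are swapped under $x^n(\cdot)(1/x)$ by (\ref{eq:ppoly-three}), so the naive symmetric and antisymmetric parts mix the $h_k(\Delta)$. The clean fix is to pair $h_k(\Delta)$ with $h_{n-k}(\inte\Delta)$ using $x^nh(\Delta,1/x)=h(\inte\Delta,x)$ for the ball, so that $a(x)=\tfrac12\sum_k\bigl(h_k(\Delta)+h_k(\inte\Delta)\bigr)\widehat p_k(x)$ with $\widehat p_k(x):=\tfrac12(p_{\fF,n,k}(x)+p_{\fF,n,n-k}(x))$ symmetric, and similarly for $xb(x)$ with the antisymmetrized $\check p_k(x)$; then one checks $(\widehat p_0,\dots,\widehat p_n)$ and $(\check p_0/x,\dots)$ are each interlacing sequences of symmetric polynomials — this follows because symmetrization preserves interlacing within a family that is already interlacing and mapped to itself under $x^n(\cdot)(1/x)$, a standard fact (cf. the symmetric-decomposition machinery of \cite{BS20} and \cite[Theorem~2.6]{BS20}). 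Nonnegativity of $h_k(\Delta)+h_k(\inte\Delta)$ and of $h_k(\Delta)-h_k(\inte\Delta)$ (after the degree shift) is exactly the statement that a ball has a nonnegative symmetric decomposition, which is classical. Granting those, both $a(x)$ and $b(x)$ are nonnegative combinations of interlacing sequences, hence real-rooted by \cite[Theorem~8.2]{Bra15}, completing the proof.
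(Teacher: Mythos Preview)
Your proposal does not go through as written; there are two genuine gaps, and the paper's argument is entirely different.

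First, the symmetric decomposition you write down is with respect to the wrong center. The identity $x^n p_{\fF,n,k}(1/x)=p_{\fF,n,n-k}(x)$ means that $\dD_{\fF,n}$ intertwines with $g\mapsto x^n g(1/x)$, not with $g\mapsto x^{n-1}g(1/x)$. Consequently your ``$a(x)=\tfrac12\sum_k(h_k(\Delta)+h_k(\inte\Delta))\,\widehat p_k(x)$'' is symmetric with center $n/2$, not $(n-1)/2$: since $h_k(\inte\Delta)=h_{n-k}(\Delta)$, the coefficient sequence $c_k:=h_k(\Delta)+h_{n-k}(\Delta)$ satisfies $c_k=c_{n-k}$, and then $x^n a(1/x)=\sum_k c_k\,p_{\fF,n,n-k}(x)=\sum_k c_{n-k}\,p_{\fF,n,k}(x)=a(x)$. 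So what you produce is the center-$n/2$ symmetrization, not the symmetric half of the decomposition with respect to $n-1$ that the proposition asks for. There is no evident way to repair this with $\dD_{\fF,n}$ alone, precisely because the operator respects the ``wrong'' symmetry.

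Second, even setting that aside, the crucial claim that the symmetrized family $(\widehat p_k)$ (and the antisymmetrized one) is again interlacing is not justified. From the interlacing of $\qQ_{\fF,n}$ one gets, for $i<j\le n/2$, that $p_{\fF,n,i}\preceq\widehat p_j$ and $\widehat p_j\preceq p_{\fF,n,n-i}$; but $\widehat p_i=p_{\fF,n,i}+p_{\fF,n,n-i}$ is a sum of one polynomial below $\widehat p_j$ and one above it, and there is no general principle forcing $\widehat p_i\preceq\widehat p_j$. The reference to \cite[Theorem~2.6]{BS20} concerns a single polynomial, not the preservation of interlacing under termwise symmetrization of a sequence, and does not supply what you need. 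The related remark that ``nonnegativity of $h_k(\Delta)-h_k(\inte\Delta)$'' is classical is also off: $h_k(\Delta)-h_{n-k}(\Delta)$ changes sign across $k=n/2$, so the antisymmetric coefficients are not all nonnegative and your combination is not a nonnegative sum of an interlacing family.

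The paper proceeds in a completely different way that avoids the $p_{\fF,n,k}(x)$ altogether. It rewrites Equation~(\ref{eq:BW08}) so that replacing $h_i(\Delta)$ by $h_{n-i}(\Delta)$ in the Ehrhart-type generating function produces $x^n h(\sd(\Delta),1/x)/(1-x)^{n+1}$; it then invokes Stanley's monotonicity inequalities $\sum_{j\le i} h_{n-j}(\Delta)\le\sum_{j\le i} h_j(\Delta)$ for balls \cite[Lemma~2.3]{Sta93}, which are exactly the hypotheses of \cite[Theorem~2.13]{BS20}. That theorem then yields the nonnegative real-rooted symmetric decomposition directly, with no need to control interlacing of auxiliary families.
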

\begin{proof}
We recall Equation~(\ref{eq:BW08}).
Shifting $m$ to $m-1$ and replacing $i$ in the inner 
summation with $n-i$ first, and then replacing $m$ 
with $-m$ and applying 
\cite[Proposition~4.2.3]{StaEC1}, this equation 
may be rewritten successively as
\begin{eqnarray*} 
\sum_{m \ge 1} \left( \, \sum_{i=0}^n h_{n-i}(\Delta) 
m^i(m-1)^{n-i} \right) x^m & = & 
\frac{x h(\sd(\Delta), x)}{(1-x)^{n+1}} \ \ \
\Leftrightarrow \\
\sum_{m \ge 1} \left( \, \sum_{i=0}^n h_{n-i}(\Delta) 
m^i(m+1)^{n-i} \right) x^m & = & 
\frac{x^n h(\sd(\Delta), 1/x)}{(1-x)^{n+1}} \
\Leftrightarrow \\
\sum_{m \ge 0} \left( \, \sum_{i=0}^n h_{n-i}(\Delta) 
m^i(m+1)^{n-i} \right) x^m & = & 
\frac{x^n h(\sd(\Delta), 1/x)}{(1-x)^{n+1}} \, . \\
\end{eqnarray*}
The last equivalence holds because $h_n(\Delta) = 0$. 
As a consequence of \cite[Lemma~2.3]{Sta93}, we have
\[ h_n(\Delta) + h_{n-1} (\Delta) + \cdots + h_{n-i}
   (\Delta) \ \le \ h_0(\Delta) + h_1 (\Delta) + \cdots 
	 + h_i(\Delta) \]
for all $0 \le i \le n/2$. An application of 
\cite[Theorem~2.13]{BS20} shows that $x^n h(\sd(\Delta), 
1/x)$ has a nonnegative real-rooted symmetric 
decomposition with respect to $n$. Since $x^n 
h(\sd(\Delta), 1/x)$ has zero constant term, this is 
equivalent to the desired statement.
\end{proof}

It seems natural to ask for conditions on an 
$f$-triangle $\fF$ which guarantee that $h_\fF(\Delta, 
x)$ has a nonnegative, real-rooted symmetric decomposition 
for reasonably broad classes of simplicial complexes 
$\Delta$. A forthcoming paper will show that, in fact, 
this is the case under the same conditions on $\fF$, as 
those of Theorem~\ref{thm:mainBB}, and the same conditions 
on $h(\Delta, x)$, as those in the special case of 
barycentric subdivision \cite[Theorem~2.13]{BS20}. 
It may also be interesting to study the 
`deranged map' of \cite[Section~3.2]{BS20} in the 
framework of uniform triangulations.

\bigskip
\noindent \textbf{Acknowledgments}. This article was 
written while the author was in residence at the Institute  
Mittag-Leffler in Djursholm, Sweden during the `Algebraic 
and Enumerative Combinatorics' program in Spring 2020, 
organized by Sara Billey, Peter Br\"and\'en, Sylvie 
Corteel and Svante Linusson and supported by the Swedish 
Research Council under grant no. 2016-06596. The author 
wishes to thank Katharina Jochemko and Martina 
Juhnke-Kubitzke for useful discussions and comments, 
respectively.

\end{document}